\newcommand{\bpm}{\begin{pmatrix}}
\newcommand{\epm}{\end{pmatrix}}
\newcommand{\bsm}{\begin{smallmatrix}}
\newcommand{\esm}{\end{smallmatrix}}
\newcommand{\bspm}{\left(\begin{smallmatrix}}
\newcommand{\espm}{\end{smallmatrix}\right)}
\newcommand{\bbm}{\begin{bmatrix}}
\newcommand{\ebm}{\end{bmatrix}}
    \newcommand{\BA}{{\mathbb {A}}} 
    \newcommand{\C}{{\mathbb {C}}}
     \newcommand{\CB}{{\mathcal {B}}}
    \newcommand{\CM}{{\mathcal {M}}} 
    \newcommand{\CO}{{\mathcal {O}}} 
    \newcommand{\CS}{{\mathcal {S}}} 
    \newcommand{\CW}{{\mathcal {W}}}
     \newcommand{\fo}{{\mathfrak{o}}}  \newcommand{\fp}{{\mathfrak{p}}}
    \newcommand{\RG}{{\mathrm {G}}}
    \newcommand{\RO}{{\mathrm {O}}}
    \newcommand{\RU}{{\mathrm {U}}}
    \newcommand{\bW}{{\mathbf {W}}}
    \newcommand{\lenth}{{\mathrm {\lenth}}}
     \newcommand{\GL}{{\mathrm{GL}}}
    \newcommand{\Hom}{{\mathrm{Hom}}} 
    \newcommand{\Ind}{{\mathrm{Ind}}} \newcommand{\ind}{{\mathrm{ind}}}
    \newcommand{\id}{{\mathrm{id}}}
    \newcommand{\cond}{\mathrm{cond}} 
    \renewcommand{\Re}{{\mathrm{Re}}} 
    \newcommand{\Res}{{\mathrm{Res}}}
    \newcommand{\Mat}{{\mathrm {Mat}}}
 \newcommand{\Vol}{{\mathrm{Vol}}}
\newcommand{\supp}{\mathrm{supp}}
 \newcommand{\SL}{{\mathrm{SL}}}
 \newcommand{\SO}{{\mathrm{SO}}}
 \newcommand{\GSO}{{\mathrm{GSO}}}
\newcommand{\vol}{{\mathrm{vol}}}
 \newcommand{\Sp}{{\mathrm{Sp}}}
    \newcommand{\bx}{{\bf {x}}}   \newcommand{\bm}{{\bf {m}}}   \newcommand{\bn}{{\bf {n}}} \newcommand{\bt}{{\bf {t}}}
\newcommand{\diag}{{\mathrm {diag}}}
    \newcommand{\wt}{\widetilde}
    \newcommand{\wpair}[1]{\left\{{#1}\right\}}
    \newcommand{\ov}{\overline}
     \newcommand{\ra}{\rightarrow}
    \theoremstyle{plain}
       \newtheorem*{theorem*}{Theorem}
    \newtheorem{thm}{Theorem}[section] \newtheorem{cor}[thm]{Corollary}
    \newtheorem{lem}[thm]{Lemma}  \newtheorem{prop}[thm]{Proposition}
    \numberwithin{equation}{section}
\title{On a refined local converse theorem for $\SO(4)$}
\author{Pan Yan}
\address{Department of Mathematics, University of Arizona, Tucson, AZ 85721 USA}
\email{panyan@math.arizona.edu}
\author{Qing Zhang}
\address{School of Mathematics and Statistics, Huazhong University of Science and
Technology, Wuhan, 430074, China}
\thanks{The second named author was partially supported by NSFC grant 12371010.}
\email{qingzh@hust.edu.cn}
\subjclass[2010]{22E50, 11F70}
\keywords{gamma factors, Howe vectors, refined local converse theorem}
\begin{document}
\maketitle
\begin{abstract}
Recently, Hazeltine-Liu, and independently Haan-Kim-Kwon, proved a local converse theorem for $\mathrm{SO}_{2n}(F)$ over a $p$-adic field $F$, which says that, up to an outer automorphism of $\mathrm{SO}_{2n}(F)$, an irreducible generic representation of $\mathrm{SO}_{2n}(F)$ is uniquely determined by its twisted gamma factors by generic representations of $\mathrm{GL}_k(F)$ for $k=1,\dots,n$. It is desirable to remove the ``up to an outer automorphism" part in the above theorem using more twisted gamma factors, but this seems a hard problem.  In this paper, we provide a solution to this problem for the group $\mathrm{SO}_4(F)$, namely,  we show that a generic supercuspidal representation $\pi$ of $\mathrm{SO}_4(F)$ is uniquely determined by its $\mathrm{GL}_1$, $\mathrm{GL}_2$ twisted local gamma factors and a twisted exterior square local gamma factor of $\pi$. 
\end{abstract}

\section{Introduction}

Let $F$ be a local field. Let $G_n$ denote the split classical groups $\SO_{2n+1}, \Sp_{2n}$ or quasi-split classical groups $\RU_{2n}, \RU_{2n+1}$ over $F$. Here in the quasi-split case, the unitary groups are defined with respect to a fixed quadratic extension $E/F$.  Given an irreducible generic representation $\pi$ of $G_n(F)$ and an irreducible generic representation $\tau$ of $\GL_n(F)$ (in the split case) or $\GL_n(E)$ (in the unitary group case), one can associate a local gamma factor $\gamma(s,\pi\times \tau,\psi)$ using either Langlands-Shahidi method, or appropriate Rankin-Selberg integrals; see \cite{Kaplan} for a nice survey on the definition of these gamma factors using Rankin-Selberg method (and see \cites{BAZ, Morimoto-gamma, CW} in the unitary group case). Here $\psi$ is a fixed additive character of $F$. The local converse theorems for these groups, see \cite{JS, Chai, JL, Morimoto, Sp(2r), U(2r+1), Jo, YZ} for various cases, assert that the representation $\pi$ is uniquely determined by the family $\gamma(s,\pi\times \tau, \psi)$ as $\tau$ runs over all irreducible generic representations of $\GL_m(F)$ (or $\GL_m(E)$) for all $m$ with $1\le m\le n$. See \cite{LZ} for more references. However, a similar converse theorem for $\SO_{2n}$ is false in general because of the existence of an outer automorphism, which is defined by conjugation of an element $c\in \RO_{2n}(F)-\SO_{2n}(F)$ (for an explicit matrix form of $c$, see Section~\ref{subsection-HL-Theorem}). In fact, the twisted gamma factors of $\SO_{2n}(F)\times \GL_k(F)$ cannot distinguish $\pi$ and $c\cdot \pi$ \cite{HL: p-adic, HL: finite, HKK}. Here $c\cdot \pi$ is the representation of $\SO_{2n}(F)$ defined by $(c\cdot\pi)(g)=\pi(cgc^{-1})$. Thus if one only uses the twisted gamma factors of $\SO_{2n}(F)\times \GL_k(F)$, the best possible result one could expect is that these gamma factors can determine $\pi$ up to the conjugation by $c$. This is actually the main result of \cite{HL: p-adic} and \cite{HKK}. A similar phenomenon is also known over finite fields, see \cite{HL: finite}.  This phenomenon is consistent with the work of Arthur on the local Langlands correspondence \cite{Arthur}.

Let $\pi$ be an irreducible generic representation of $\SO_{2n}(F)$. Since $\pi$ and $c\cdot \pi$ are generic with respect to the same Whittaker datum, they cannot be in the same $L$-packet unless $\pi\cong c\cdot \pi$ by the uniqueness of generic element (with respect to a fixed Whittaker datum) in an $L$-packet as proved in \cite{Atobe}.  It is thus desirable to find additional invariants to distinguish $\pi$ and $c\cdot \pi$ if $\pi\ncong c\cdot \pi.$ 

In this paper, for the small group $\SO_4(F)$, we construct an additional twisted local gamma factor $\gamma(s,\pi,\wedge^2_+\times \eta, \psi)$, where $\wedge^2_+$ is a constituent of $\wedge^2$ of the dual group $\SO_{4}(\C)$ (see explanation below), and $\eta$ is a character of $F^\times$. We show that, the twisted gamma factors of $\SO_4\times \GL_k$ for $k=1,2$, plus the gamma factors $\gamma(s,\pi,\wedge^2_+\times \eta, \psi)$ can determine the representation $\pi$ uniquely. Namely, we show the following
\begin{thm}\label{converse theorem}
Let $F$ be a $p$-adic field. Let $\pi,\pi'$ be two irreducible $\psi$-generic supercuspidal representations of $\SO_4(F)$ with the same central character. If $\gamma(s,\pi\times \tau,\psi)=\gamma(s,\pi'\times \tau,\psi)$ and $\gamma(s,\pi,\wedge^2_+\times \eta,\psi)=\gamma(s,\pi',\wedge^2_+\times \eta,\psi)$ for any irreducible generic representation $\tau$ of $\GL_k(F)$ with $k=1,2$ and any quasi-character $\eta$ of $F^\times$, then $\pi\cong \pi'.$
\end{thm}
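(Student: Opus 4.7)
My plan is to follow the Howe-vector and partial Bessel function technique that has been developed for classical groups (Baruch, Jo, Q.~Zhang) while exploiting the accidental isogeny $\SO_4 \sim (\SL_2\times\SL_2)/\mu_2$. Under this isogeny the outer conjugation $c$ corresponds to swapping the two $\SL_2$-factors, while $\wedge^2$ of the four-dimensional standard representation of $\SO_4(\C)$ decomposes as $\wedge^2_+\oplus \wedge^2_-$ with $c$ interchanging the two summands. This is the structural reason to expect that the extra twisted factor $\gamma(s,\pi,\wedge^2_+\times\eta,\psi)$ can distinguish $\pi$ from $c\cdot\pi$.

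First, I would invoke the local converse theorem of Hazeltine--Liu and Haan--Kim--Kwon in the case $n=2$: the hypothesis that $\gamma(s,\pi\times\tau,\psi)=\gamma(s,\pi'\times\tau,\psi)$ for every irreducible generic $\tau$ of $\GL_k(F)$ with $k=1,2$ already forces either $\pi\cong\pi'$ or $\pi\cong c\cdot\pi'$. If the former holds we are done, so I may reduce to the case $\pi'\cong c\cdot\pi$ and must rule this out under the assumption $\pi\not\cong c\cdot\pi$.

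For the remaining case, I would analyze how conjugation by $c$ acts on the twisted exterior square gamma factor. Because $c$ permutes the two irreducible $\SO_4(\C)$-summands of $\wedge^2$, one should obtain the functional equation $\gamma(s,c\cdot\pi,\wedge^2_+\times\eta,\psi)=\gamma(s,\pi,\wedge^2_-\times\eta,\psi)$. Combined with the hypothesis this yields
\[
\gamma(s,\pi,\wedge^2_+\times\eta,\psi)=\gamma(s,\pi,\wedge^2_-\times\eta,\psi) \quad \text{for every quasi-character }\eta\text{ of }F^\times.
\]
Transferring $\pi$ through the local Langlands correspondence for $\SO_4$ to a pair of parameters $(\phi_1,\phi_2)$ with $\det\phi_1=\det\phi_2$ (the central character hypothesis), the left side becomes $\gamma(s,\Sym^2\phi_1\otimes\det(\phi_2)\cdot\eta,\psi)$ while the right side becomes $\gamma(s,\Sym^2\phi_2\otimes\det(\phi_1)\cdot\eta,\psi)$. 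Equality for all $\eta$ combined with Jacquet's local converse theorem for $\GL_3$ (applied to the $\Sym^2 \phi_i$, which are three-dimensional) forces $\Sym^2\phi_1\cong\Sym^2\phi_2$; since $\Sym^2$ is injective up to a sign twist on two-dimensional parameters and the sign ambiguity is pinned down by $\det\phi_1=\det\phi_2$, we get $\phi_1\cong\phi_2$, i.e.\ $\pi\cong c\cdot\pi$, contradicting our standing assumption.

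The main obstacle will be the first sentence of the previous paragraph: \emph{making the functional equation $\gamma(s,c\cdot\pi,\wedge^2_+\times\eta,\psi)=\gamma(s,\pi,\wedge^2_-\times\eta,\psi)$ rigorous on the analytic side}. This requires constructing a Rankin--Selberg-type integral representing $\gamma(s,\pi,\wedge^2_+\times\eta,\psi)$ using Howe vectors on $\SO_4(F)$ (so that the equality is an equality of analytic, not merely Artin, gamma factors), proving the integral's convergence, rationality, and a local functional equation, and verifying that the specific choice of integrand and of maximal parabolic/Levi realizes precisely the $\wedge^2_+$ summand. The computation showing that intertwining $\pi$ by $c$ has the effect of swapping $\wedge^2_+$ with $\wedge^2_-$ inside this integral will be the most delicate point and will force a careful choice of the matrix realization of $\SO_4$, of the maximal unipotent used to form Whittaker models, and of the Howe vector data.
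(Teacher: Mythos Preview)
Your reduction step via Hazeltine--Liu/Haan--Kim--Kwon matches the paper exactly. After that, however, your route diverges and carries real gaps.

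The decisive problem is that the object $\gamma(s,\pi,\wedge^2_+\times\eta,\psi)$ appearing in the hypothesis is \emph{defined analytically} in this paper, as the proportionality constant in a local functional equation for a Rankin--Selberg type integral on $\SO_4\times\widetilde{\SL}_2$; no identification with the Artin factor $\gamma(s,(\wedge^2_+\circ\phi_\pi)\otimes\eta,\psi)$ is established beyond the unramified case. Your argument ``transfer through LLC and compare $\Sym^2\phi_i$'' therefore does not connect to the hypothesis. You acknowledge this obstacle, but your proposed fix --- building the integral and showing that $c$ swaps $\wedge^2_+$ with $\wedge^2_-$ inside it --- still produces, at best, the equality of two analytic gamma factors twisted by characters $\eta$ of $F^\times$. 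That brings up the second gap: the converse theorem for $\GL_3$ requires twists by $\GL_1$ \emph{and} $\GL_2$; equality of $\gamma(s,\sigma_1\times\eta,\psi)=\gamma(s,\sigma_2\times\eta,\psi)$ for all characters $\eta$ does not force $\sigma_1\cong\sigma_2$ on $\GL_3$. (Your claim that $\Sym^2\phi_1\cong\Sym^2\phi_2$ with $\det\phi_1=\det\phi_2$ pins down $\phi_1\cong\phi_2$ is also not literally correct --- it only gives $\phi_2\cong\phi_1\otimes\chi$ for quadratic $\chi$ --- though that ambiguity happens to disappear after restriction to $\SO_4$.)

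The paper avoids the Galois side entirely. It plugs Howe vectors $W_m$, a specific Schwartz function $\phi^m$, and a carefully chosen section $f_s^i$ into the local zeta integral; a direct computation shows $\Psi((W^f)_m,\phi^m,f_s^i)=\Psi((W^{f'})_m,\phi^m,f_s^i)$ is a nonzero constant, so the gamma-factor hypothesis forces equality on the other side of the functional equation. Unwinding that side and applying an inverse Mellin transform gives $\CB_m(ts_\alpha,f)=\CB_m(ts_\alpha,f')$ on the whole Bruhat cell $Bs_\alpha B$. The finish uses the accidental isomorphism, but at the level of representations rather than parameters: lift to $\GSO_4(F)\cong(\GL_2(F)\times\GL_2(F))/\Delta F^\times$, write $\Pi=\pi_1\boxtimes\pi_2$ and $c\cdot\Pi=\pi_2\boxtimes\pi_1$, and observe that $Bs_\alpha B\subset\iota_\alpha(\GL_2)$, so the Bessel-function equality just proved becomes $W^1_m(g)=W^2_m(g)$ for $g\in B_{\GL_2}s_\alpha B_{\GL_2}$, hence on all of $\GL_2$; uniqueness of Whittaker models then gives $\pi_1\cong\pi_2$ and therefore $\pi\cong c\cdot\pi\cong\pi'$. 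No Langlands correspondence and no $\GL_3$ converse theorem are invoked.
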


Let us now explain the representation $\wedge^2_+$ of $\SO_4(\C)$. In fact, such a representation can be defined for $\SO_{2n}(\C)$ where $n$ is any positive integer, as explained below. Let $W=\C^{2n}$ and let $\{e_i, 1\le i\le 2n\}$ be the standard basis of $W$. Consider the bilinear form $Q$ on $W$ defined by 
\begin{equation*}
Q(e_i, e_j)=
\begin{cases}
1,  &i+j= 2n+1, \\
0, &i+j\ne 2n+1.
\end{cases}
\end{equation*}
In other words, the bilinear form $Q$ is given by $Q(v_1,v_2)=v_1J_{2n}v_2^t$, where $v_i\in W$ is viewed as a row vector, and $J_{2n}=\left(\begin{smallmatrix}&&1\\ &\iddots &\\ 1&& \end{smallmatrix}\right)\in \GL_{2n}$. Using the bilinear form $Q$, we can realize the groups $\RO_{2n}(\C)$ and $\SO_{2n}(\C)$ as
\begin{equation*}
\RO_{2n}(\C)=\wpair{g\in \GL_{2n}(\C): Q(gw_1, gw_2)=Q(w_1,w_2) \text{ for all }   w_1, w_2\in W},
\end{equation*}
and
\begin{equation*}
\SO_{2n}(\C)=\wpair{g\in \RO_{2n}(\C): \det(g)=1}.
\end{equation*}
We fix an isomorphism $\wedge^{2n}(W)\to \C$ given by 
\begin{equation*}
e_1\wedge e_2\wedge \dots \wedge e_{2n}\mapsto 1.
\end{equation*}
Then the bilinear form $Q$ defines an isomorphism 
\begin{equation*}
\varphi_1: \wedge^n(W)\to \wedge^n(W^*)
\end{equation*}
and the wedge product $\wedge^n(W)\times \wedge^n (W)\to \wedge^{2n}(W) \cong \C$ determines an isomorphism 
\begin{equation*}
\varphi_2: \wedge^n(W^*)\to \wedge^n(W).
\end{equation*} 
Denote $\rho=\varphi_2\circ \varphi_1$. Then we can check that $\rho^2=\id$. Let $W_j\subset \wedge^n(W)$ be the $j$-eigenspace of $\rho$ for $j\in \wpair{\pm 1}$. 
Each $W_j$ is invariant under the natural action of $\SO_{2n}(\C)$. Denote the corresponding action of $\SO_{2n}(\C)$ on $W_{j}$ by $\wedge^n_{+}$ (resp. $\wedge^n_-$) if $j=1$ (reps. $j=-1$). Then we have 
$$\wedge^{n}=\wedge^n_+\oplus \wedge^n_-.$$
In fact, by \cite{FH}*{Theorem 19.2}, both $\wedge^n_+$ and $\wedge^n_-$ are irreducible. Actually, one can also check that $\wedge^n_{-}=c\cdot \wedge^n_{+}$, where $c$ denotes the outer automorphism of $\SO_{2n}(\C)$. Note that, the definition of $\wedge^n_+$ depends on the choice of the isomorphism $\wedge^{2n}(W)\to \C $. Thus there is no canonical choice of $\wedge_+^n.$ 

We assume that $F$ is a $p$-adic field and $W_F'$ is the Weil-Deligne group of $F$. Let $\pi$ be an irreducible representation $\SO_{2n}(F)$. Arthur \cite{Arthur} defined  the local Langlands parameter $\phi_\pi: W_F'\to \SO_{2n}(\C)$ of $\pi$ up to the outer conjugation $c\in \RO_{2n}(\C)-\SO_{2n}(\C)$.  Let $\sigma$ be an irreducible representation of $\GL_k(F)$ for some positive integer $k$ then we write $\gamma(s,\pi,\wedge^n_{\pm}\times \sigma,\psi):=\gamma(s,(\wedge^n_{\pm}\circ \phi_\pi)\otimes \phi_\sigma,\psi)$ by abuse of notation, where $\psi$ is a fixed nontrivial additive character of $F$, and $\phi_\sigma: W_F'\to \GL_k(\C)$ is the local Langlands parameter of $\sigma$. Since there is no canonical choice of $\phi_\pi$ because of the existence of the outer automorphism $c$, the above notation of $ \gamma(s,\pi,\wedge^n_{\pm}\times \sigma,\psi)$ is vague. Thus it is desirable to give pure representation theoretic definition of these local gamma factors $\gamma(s,\pi,\wedge^n_+\times \sigma, \psi)$.

In this article, for the small group $\SO_4$ case, given an automorphic cuspidal generic representation $\pi$ of $\SO_4(\BA)$ where $\BA$ is the ring of adeles of a global field $F$, and a character $\eta$ of $F^\times \backslash \BA^\times$,  we construct a global integral which is Eulerian and at an unramified place, it represents the local $L$-function $L(s,\pi_v, \wedge^2_+\times \eta_v)$. The local gamma factors  $\gamma(s,\pi_v, \wedge^2_+\times \eta_v, \psi_v)$ are then constructed using the local functional equations of those local integrals. It turns out that the new local gamma factor can determine the values of certain Whittaker functions of $\pi$ on the Bruhat cells, at which only the sum of Whittaker functions of $\pi$ and $c\cdot \pi$ can be determined using the $\GL_k$-twisted gamma factors, as shown in \cite{HL: p-adic, HKK}. Theorem \ref{converse theorem} follows from this consideration and the results of \cite{HL: p-adic, HKK} easily.

Although we can only handle the small rank case, we expect that our result can shed some light on the general problem: find enough gamma factors for generic representations of $\SO_{2n}(F)$ so that they can distinguish a representation $\pi$ of $\SO_{2n}(F)$ and its $c$-conjugate $c\cdot \pi$ if $\pi\ncong c\cdot \pi$. After  Theorem \ref{converse theorem}, one might expect the twisted gamma factor $\gamma(s,\pi,\wedge^n_+\times \sigma,\psi)$ will be enough for the purpose, where $\pi$ is an irreducible generic representation of $\SO_{2n}(F)$ and $\sigma$ is an irreducible generic representation of $\GL_k(F)$. In other words, if $\pi$ is an irreducible generic representation of $\SO_{2n}(F)$ such that $\pi \ncong c\cdot\pi$, then one should expect that there is an irreducible generic representation $\sigma$ of $\GL_k(F)$ for some $k\le n$ such that $\gamma(s,\pi, \wedge^n_+\times \sigma,\psi)\ne \gamma(s,c\cdot\pi,\wedge^n_+\times\sigma,\psi).$ But one referee told us that they could find a counterexample even for $n=3$. Thus one needs other gamma factors to distinguish $\pi$ and $c\cdot \pi$.

The paper is organized as follows. In Section~\ref{section-local}, we introduce the local zeta integral and compute it at unramified places, and use it to define the gamma factor $\gamma(s,\pi,\wedge^2_+\times \eta, \psi)$.  We also construct a global integral and show that it is Eulerian and its local piece is the local zeta integral we study in this paper. In Section~\ref{section-partial-bessel-functions} we review the theory of partial Bessel functions as well as a local converse theorem of Hazeltine and Liu. Finally, in Section~\ref{section-proof}, we prove Theorem~\ref{converse theorem}.

\section*{Acknowledgement}
The authors would like to thank Alex Hazeltine and Baiying Liu for sending us their manuscripts \cite{HL: finite, HL: p-adic} and for helpful communications. The idea of this paper was initiated when the second named author was a graduate student at Ohio State University. We thank our advisor Jim Cogdell for his constant encouragement and support. We thank Chi-Heng Lo for helpful communications. We greatly appreciate the anonymous referees for their careful reading and very helpful suggestions. In particular, we would like to thank one referee for informing us that they have a counterexample to show that the gamma factors $\gamma(s,\pi, \wedge^n_+\times \sigma, \psi)$ are not enough to distinguish $\pi$ and $c\cdot \pi$ even for $n=3$.

\section*{Notations}
Let $F$ be a $p$-adic field. For an algebraic group $\mathbf{G}$ we denote its group of $F$-points by $G(F)$ or simply by $G$. For a positive integer $r,$ let $J_r=\left(\begin{smallmatrix}&&1\\ &\iddots &\\ 1&& \end{smallmatrix}\right)\in \GL_r(F)$. We realize the special orthogonal group $\SO_r$ as the group of all $r\times r$ matrices which satisfy ${}^t\! g J_r g= J_r$. In this paper we will focus on the case when $r=4$. 

Let $B$ denote the upper triangular Borel subgroup of $\SO_4$. Then we have the Levi decomposition $B=T U$ where $T$ is the torus and $U$ is the unipotent subgroup.  More specifically, we have
 $$T=\wpair{t(a_1,a_2) :=\diag(a_1,a_2,a_2^{-1},a_1^{-1}), a_1, a_2\in F^\times}.$$
 Consider the roots $\alpha,\beta$ of $\SO_4$ defined by 
$$\alpha(t(a_1,a_2))=a_1/a_2, \beta(t(a_1,a_2))=a_1a_2.$$
Then the set $\Delta=\wpair{\alpha,\beta}$  is the set of simple roots of $\SO_4$. For a root $\gamma,$ let $U_\gamma$ be the one parameter subgroup and we fix an isomorphism $\bx_\gamma: F\to U_\gamma$.
 Let
$$ U=\wpair{u(x,y):=\bx_{\alpha}(x)\bx_{\beta}(y)=\begin{pmatrix} 1& x&&\\ &1&&\\ &&1&-x\\ &&&1 \end{pmatrix}\begin{pmatrix} 1&& y &\\ &1&&-y \\ &&1&\\ &&&1 \end{pmatrix},x,y\in F}.$$
Let $\ov U$ be the opposite of $U$, and denote $\bar u(x,y):=\bx_{-\alpha}(x)\bx_{-\beta}(y),$ for $x,y\in F$.

Let $P=MN$ be the Siegel parabolic subgroup of $\SO_4$, with the Levi subgroup $M\cong \GL_2$ and unipotent radical $N=\wpair{\bx_\beta(y), y\in F}$. For $h\in \GL_2$, we let $h^*=J_2 {}^t\! h^{-1}J_2$ and denote 
$$\bm(h):=\begin{pmatrix}h&\\ & h^* \end{pmatrix}\in M.$$

Let $\bW=\wpair{1,s_{\alpha}, s_\beta, s_\alpha s_\beta}$ be the Weyl group of $\SO_4$, where $s_\alpha$ (resp. $s_\beta$) is the simple reflection defined by $\alpha$ (resp. $\beta$). 

Let $\wpair{e_1,e_2,e_{3},e_{4}}$ be the standard basis of the dimension 4 quadratic space $(W=\C^4,Q)$ which defines $\SO_4(\C)$ with matrix $\wpair{Q(e_i,e_j)}=J_4$. The subspace $W_1\subset \wedge^2(W)$ as in the introduction is spanned by $\CB=\wpair{ e_1\wedge e_3, e_1\wedge e_4+e_3\wedge e_2, e_2\wedge e_4}$. Let $\wedge^2_+$ be the action of $\SO_4(\C)$ on $W_1$. For a torus element $\diag(a,b,b^{-1},a^{-1})\in \SO_4(\C)$, we have
$$\wedge^2_+(\diag(a,b,b^{-1},a^{-1}))=\diag(ab^{-1},1,a^{-1}b),$$
with respect to the ordered basis $\CB$ of $W_1$.

Let $\pi$ be an irreducible representation of $\SO_4(F)$ over $F$. Let $\varpi$ be a uniformizer of $F$ and let $q$ be the cardinality of the residue field of $F$. We can consider the $L$-function $L(s,\pi,\wedge^2_+)$. In particular, if $\pi$ is unramified with Satake parameter $\diag(a,b,b^{-1},a^{-1})\in \SO_4(\mathbb{C})$, we have
$$L(s,\pi, \wedge^2_+)=\frac{1}{(1-q^{-s} )(1-ab^{-1}q^{-s})(1-a^{-1}bq^{-s})}.$$
If in addition $\eta$ is a character of $F^\times$, we then have 
$$L(s,\pi, \wedge^2_+\times \eta)=\frac{1}{(1-\eta(\varpi)q^{-s} )(1-ab^{-1}\eta(\varpi)q^{-s})(1-a^{-1}b\eta(\varpi)q^{-s})}.$$
Consider $c= \diag(1,J_2,1)\in \RO_4(F)-\SO_4(F).$ It is used to define the outer automorphism on $\SO_4$.

In the group $\SL_2(F)$, we use the following notations. Let $T_2$ be the torus of $\SL_2(F)$, which consists of elements of the form $t_{\SL_2}(a)=\diag(a,a^{-1})$ where $a\in F^\times$. Let $N_2$ be the upper triangular subgroup of $\SL_2(F)$, which consists elements of the form $\bn(x)=\left(\begin{smallmatrix} 1 &x\\ &1 \end{smallmatrix}\right)$. Let $B_2=T_2N_2$ be the upper triangular Borel subgroup of $\SL_2(F)$. Let $w_2=\left(\begin{smallmatrix} &1\\ -1&\end{smallmatrix} \right)$ denote the nontrivial Weyl element of $\SL_2(F)$.

\section{The local zeta integral and gamma factor for $\wedge^2_+$}\label{section-local}
Let $F$ be a non-archimedean local field and $\psi$ be a fixed additive character of $F$. Let $\fo_F$ be the ring of integers of $F$, $\fp$ the maximal ideal of $\fo_F$, $\varpi$ be a fixed uniformizer of $F$ and $q$ the cardinality of $\fo_F/\fp$. Let $(\cdot, \cdot)_F$ be the local Hilbert symbol.  In this section, given a generic irreducible unramified representation $\pi$ of $\SO_4(F)$ and a character $\eta$ of $F^\times$, we construct a local zeta integral which represents the $L$-function $L(s,\pi,\wedge^2_+\times \eta)$. The integral is pretty much like the integral of $\SL_2\times \GL_1$ constructed in \cite{GPSR}, which is not surprising considering that, on the Galois side, $\wedge^2_+$ defines a representation $W_F'\to \SO_3(\C)$, and hence corresponds to a representation of $\SL_2(F)$ through the local Langlands correspondence.   
\subsection{Weil representations of $\widetilde \SL_2$}\label{subsection: Weil}
We recall the theory of the Weil representation of $\wt \SL_2(F)$, the metaplectic double cover of $\SL_2(F)$. Recall that the product on $\widetilde \SL_2(F)$ is given by
$$(g_1,\zeta_1)(g_2,\zeta_2)=(g_1 g_2, \zeta_1 \zeta_2 \mathbf{c}(g_1,g_2)),$$
where $\mathbf{c}:\SL_2(F)\times \SL_2(F)\ra \wpair{\pm1}$ is defined by
$$\mathbf{c}(g_1,g_2)=(\bx(g_1),\bx(g_2))_F (-\bx(g_1)\bx(g_2), \bx(g_1g_2))_F,$$
where $$\bx\begin{pmatrix} a& b\\ c& d \end{pmatrix}=\left\{\begin{array}{lll} c, & c\ne 0, \\ d, & c=0. \end{array} \right.$$

For a subgroup $A$ of $\SL_2(F)$, we denote by $\widetilde A$ the preimage of $A$ in $\widetilde \SL_2(F)$, which is a subgroup of $\widetilde \SL_2(F)$. For an element $g\in \SL_2(F)$, we sometimes also let $g$ denote the element $(g,1)$ inside $\widetilde \SL_2(F)$. 

A representation $\pi$ of $\widetilde \SL_2(F)$ is called genuine if $\pi(\zeta g)=\zeta \pi(g)$ for all $g\in \widetilde \SL_2(F)$ and $\zeta \in \mu_2$. Let $\omega_{\psi}$ be the Weil representation of $\widetilde \SL_2(F)$ realized on $\CS(F)$, the space of Bruhat-Schwartz functions on $F$. For any $f\in \CS(F)$, the action is given by the following formulas: 
\begin{align*}
\left(\omega_{\psi} (w_2) f\right)(x)&=\gamma(\psi)\hat f(x),\\
\left(\omega_{\psi}(\bn(b))f\right) (x)&=\psi(bx^2)f(x), b\in F,\\
\left(\omega_{\psi}(t_{\SL_2}(a))f\right) (x)&=|a|^{1/2}{\mu_\psi(a)}f(ax), a\in F^\times,
\end{align*}
and 
$$ \omega_{\psi}(\zeta)f(x)=\zeta f(x), \zeta \in \mu_2. $$
Several notations need to be explained. Here $\hat f(x)=\int_F f(y)\psi(2xy)dy$ is the Fourier transform of $f(x)$ and the Haar measure $dy$ is normalized so that $(\hat f)^{\hat~}(x)=f(-x)$. The function $\mu_\psi$ is defined as $\mu_\psi(a)=\frac{\gamma(\psi)}{\gamma(\psi_a)}$, where the constant $\gamma(\psi)$ is the Weil index associated to the character $\psi$. For $a\in F^\times$, the character $\psi_a$ is defined by $\psi_a(x)=\psi(ax)$. 

The product in $\widetilde T_2$ is given by the Hilbert symbol, i.e., 
$$(t_{\SL_2}(a),1)(t_{\SL_2}(b),1)=(t_{\SL_2}(ab),(a,b)_F).$$
 The function $\mu_\psi$ satisfies
$$\mu_{\psi}(a)\mu_\psi(b)=\mu_\psi(ab)(a,b)_F,$$
and thus defines a genuine character of $\widetilde T_2$.
All of the above facts in this subsection can be found in \cite[Section 1]{GPS80}.

\subsection{The local zeta integral}\label{subsection: local zeta integral} Denote $p: \widetilde \SL_2(F)\ra \SL_2(F)$ the projection. For a character $\eta$ of $F^\times$ and $s\in \C$, consider the character $\eta_s$ of $\widetilde T_2$ defined by $\eta_s((t_{\SL_2}(a),\zeta))=\eta_s(a)=\eta(a)|a|^s$. Then $\eta_s$ factors through $\SL_2(F)$ and hence is not genuine. The product $\mu_\psi \eta_s$ is a genuine character of $\wt T_2.$ Let $P_2=T_2N_2$ be the upper triangular parabolic subgroup of $\SL_2$ with unipotent radical $N_2$. Then $\widetilde P_2=\widetilde T_2\ltimes N_2$. Extend $\mu_\psi\eta_s$ to a character of $\widetilde P_2$ by letting the action of $N_2$ be trivial. We consider the normalized induced representation
$$I(s,\eta, \psi)=\Ind_{\widetilde P_2}^{\widetilde \SL_2}(\mu_\psi \eta_{s-1/2}).$$
A section $f_s\in I(s,\eta,\psi)$ satisfies 
$$f_s( (t_{\SL_2}(a),\zeta) \tilde h)=\zeta \mu_\psi(a) \eta(a)\delta_{\wt P_2}(a)^{1/2}|a|^{s-1/2}f_s(\tilde h)=\zeta \mu_\psi(a) \eta(a)|a|^{s+1/2}f_s(\tilde h),$$
where $\delta_{P_2}$ is the modulus character of $\wt P_2$.
Note that for any $\phi\in \CS(F)$ and $f_s\in I(s,\eta, \psi)$, the function 
$$\omega_{\psi^{-1}}(\tilde h)\phi(1)f_s(\tilde h)$$
on $\widetilde \SL_2(F)$ is trivial on $\mu_2$, and thus defines a function on $\SL_2$.

Let $\psi_{U}$ be the generic character of $U$ defined by 
$$\psi_U(u(x,y))=\psi\left(x-2y\right).$$
The choice of $\psi_U$ defined above is to make the integrals for $\SO_4\times \GL_1$ and $\SO_4\times \GL_2$ easier; see \cite{Kaplan, HL: finite, HL: p-adic}. In fact, if we fix a nontrivial additive character $\psi_0$ of $F$ a priori and take $\psi(x)=\psi_0(\frac{1}{4}x)$, the character $\psi_U$ is the same as those defined in \cite{Kaplan, HL: finite, HL: p-adic}.  Let $(\pi,V_\pi)$ be a $\psi_{U}$-generic representation of $\SO_4(F)$. For $W\in \CW(\pi,\psi_{U})$, $\phi\in \CS(F)$ and $f_s\in I(s,\eta,\psi^{-1})$, we consider the local zeta integral

$$\Psi(W,\phi,f_s)=\int_{N_2\backslash  \SL_2(F)}W\left(\bm(h)\right)(\omega_{\psi^{-1}}(h)\phi)(1)f_s(h)dh.$$

Formally this is well-defined because for any $n\in N_2$, we have $W(\bm(nh))=\psi(n)W(\bm(h))$ and $\omega_{\psi^{-1}}(nh)\phi(1)=\psi^{-1}(n)\omega_{\psi^{-1}}(h)\phi(1)$.

\begin{lem}\label{lem: convergence}
The local zeta integral $\Psi(W,\phi,f_s)$ is absolutely convergent for $\Re(s)\gg 0$ and defines a meromorphic function of $q^{-s}$. Moreover, we can choose $W,\phi,f_s$ such that $\Psi(W,\phi,f_s)$ is a nonzero constant.
\end{lem}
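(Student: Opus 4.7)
The plan is a standard three-step argument --- absolute convergence, meromorphic continuation, and non-vanishing --- all resting on the Iwasawa decomposition $\SL_2(F)=N_2T_2K_0$ with $K_0=\SL_2(\fo_F)$. First, unfolding along Iwasawa, the integral rewrites as
\begin{equation*}
\Psi(W,\phi,f_s)=\int_{K_0}\int_{F^\times}W(t(a,a^{-1})\bm(k))\,(\omega_{\psi^{-1}}(t_{\SL_2}(a)k)\phi)(1)\,f_s(t_{\SL_2}(a)k)\,|a|^{-2}\,d^\times a\,dk.
\end{equation*}
Using the $\widetilde P_2$-equivariance of $f_s$ together with the torus formulas for $\omega_{\psi^{-1}}$ in Subsection~\ref{subsection: Weil}, the two genuine $\mu_{\psi^{-1}}$-factors combine (the $\mu_2$-parts cancel since $\zeta^2=1$) into a character of $T_2$, and the $s$-dependence collects into a single factor $\eta(a)|a|^{s}$ up to bounded data. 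The $K_0$-integral is compact with bounded integrand, so convergence reduces to the $a$-integral. Applying the standard asymptotic expansion of Whittaker functions on $\SO_4$ --- rapid decay as $|a|\to\infty$ from the nontriviality of $\psi_U$ under torus conjugation, and a finite-exponent expansion $W(t(a,a^{-1}))\sim\sum_j c_j|a|^{s_j}$ near $0$ via Casselman--Jacquet theory --- bounds the integrand by $\sum|a|^{\Re(s)+c_j}$ near $0$ with compact support at $\infty$, giving absolute convergence for $\Re(s)\gg 0$.

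Second, for meromorphic continuation one reduces to $K_0$-finite flat sections, on which $f_s|_{K_0}$ is $s$-independent. The inner $a$-integral then becomes a Tate-style Mellin integral of a function with a finite-exponent expansion at $0$ and compact support at $\infty$; expanding this expansion geometrically in $q^{-s}$ writes each contribution as a rational function of $q^{-s}$. Hence $\Psi$ extends to a rational, and in particular meromorphic, function of $q^{-s}$.

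Third, to realize $\Psi$ as a nonzero constant in $s$, the plan is to localize all three pieces of data so that only $|a|=1$ contributes and the factor $|a|^{s+1/2}$ collapses to $1$. Fix an open compact $K_1\subset K_0$ with $K_1\cap\widetilde P_2=\{1\}$, and let $f_s^\circ$ be the flat section supported on $\widetilde P_2K_1$ with $f_s^\circ|_{K_1}\equiv 1$; this restriction is independent of $s$. Choose $\phi\in\CS(F)$ supported near $1\in\fo_F^\times$ with $\phi(1)\ne 0$, and shrink $K_1$ if necessary so that $\omega_{\psi^{-1}}(k)\phi\equiv\phi$ for $k\in K_1$. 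Finally, using smoothness of $\pi$ and nontriviality of the Whittaker model, pick $W\in\CW(\pi,\psi_U)$ right-invariant under $\bm(K_1)$ such that $a\mapsto W(t(a,a^{-1}))$ is supported in a small unit neighborhood $1+\varpi^N\fo_F$ and $W(1)\ne 0$. On the common support $|a|=1$, so $|a|^s\equiv 1$, and $\Psi$ collapses to a positive volume factor times $W(1)\phi(1)$, a nonzero constant in $s$.

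The main obstacle I expect is the last step: producing a Whittaker function whose Siegel-torus restriction is supported in a prescribed small neighborhood of $1$. For supercuspidal $\pi$ this is automatic from the compactness of $W$'s support modulo the center (Bernstein--Zelevinsky); in the general generic case one must invoke smoothness of $\pi$ together with the flexibility of Whittaker functions on the open Bruhat cell to arrange the required localization.
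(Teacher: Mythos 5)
Your proposal is correct in substance and lands close to the paper's route, which is terse: the authors simply cite \cite{GPSR} for convergence and meromorphy (your Iwasawa-plus-Whittaker-asymptotics argument is exactly the omitted standard proof), and they establish nonvanishing not here but by the explicit computation \eqref{eq-value-of-local-integral} in the proof of Proposition~\ref{prop: main}, where $W$ is a Howe vector $W_m$, $\phi=\phi^m$ is the characteristic function of $1+\fp^m$, and $f_s^i$ is the flat section supported on $\widetilde B_2\ov N_{2,i}$; the result is the constant $q^{-3i-m}$. Your nonvanishing step is the same localization idea with slightly different test data. Two remarks. First, the condition $K_1\cap\widetilde P_2=\{1\}$ is literally impossible: any open compact subgroup of $\SL_2(\fo_F)$ meets $P_2$ in an open subgroup. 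What you need instead is that the flat section supported on the open set $\widetilde P_2K_1$ with $f_s^\circ|_{K_1}\equiv 1$ is well defined, which holds once $K_1$ is small enough that $\mu_{\psi^{-1}}\eta_{s+1/2}$ is trivial on $T_2\cap K_1\subset 1+\fp^n$ and $\psi$ is trivial on the relevant $N_2\cap K_1$; this is the same care the paper takes in Lemma~\ref{lem: sections}(1). Second, the ``main obstacle'' you flag at the end is not actually an obstacle: since $(\omega_{\psi^{-1}}(t_{\SL_2}(a))\phi)(1)=|a|^{1/2}\mu_{\psi^{-1}}(a)\phi(a)$ and your $\phi$ is supported near $1$, the $a$-integral is already confined to a small neighborhood of $1$, where mere local constancy of $W$ (smoothness of $\pi$) gives $W(t(a,a^{-1}))=W(1)$; no control of the full torus support of $W$ is required. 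If one does want Whittaker functions with prescribed small torus support, the Howe vectors of Section~\ref{subsection-howe-vectors} supply them, which is precisely the paper's choice.
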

\begin{proof}
The first assertion follows from the asymptotic behavior of $W$ and we omit the details. See \cite{GPSR} for a proof of similar results. The second assertion will be proved by \eqref{eq-value-of-local-integral} (in the proof of Proposition~\ref{prop: main}).  
\end{proof}

\subsection{The local functional equation}

Recall that $N=\wpair{\bx_{\beta}(y): y\in F}$. Denote $\psi_N=\psi_{U}|_{N}$.  For $A\in \GL_2(F)$, we have
$$\bm(A)\bx_{\beta}(y) \bm(A)^{-1}=\bx_{\beta}(\det(A)y).$$
Thus the stabilizer of $\psi_N$ in $M\cong \GL_2(F)$ is $\SL_2(F)$. From this relation, it is easy to check the following

\begin{lem}\label{lem: conjugation}
For any $n\in N$ and $h\in \widetilde \SL_2(F)$, we have
$$W(\bm(p(h))n)=\psi_N(n)W(\bm(p(h))). $$
\end{lem}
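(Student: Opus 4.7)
The plan is to reduce the statement to the defining Whittaker property of $W$ by first showing that $\bm(p(h))$ commutes with every element of $N$.

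The key input is the commutation relation stated just above the lemma: for every $A\in \GL_2(F)$,
$$\bm(A)\bx_\beta(y)\bm(A)^{-1}=\bx_\beta(\det(A)y).$$
First I would specialize this to $A=p(h)$. Since $p(h)\in \SL_2(F)$, its determinant equals $1$, so the relation collapses to $\bm(p(h))\bx_\beta(y)\bm(p(h))^{-1}=\bx_\beta(y)$. In other words, $\bm(p(h))$ centralizes $N$, and for an arbitrary $n\in N$ we may write $\bm(p(h))n=n\,\bm(p(h))$.

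Next I would invoke the Whittaker property of $W$. Since $W\in \CW(\pi,\psi_U)$ and $N\subset U$, we have $W(n g)=\psi_U(n)W(g)=\psi_N(n)W(g)$ for every $n\in N$ and $g\in \SO_4(F)$. Applying this with $g=\bm(p(h))$ gives
$$W(\bm(p(h))n)=W(n\,\bm(p(h)))=\psi_N(n)W(\bm(p(h))),$$
as desired.

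There is no real obstacle: the proof is essentially a one-line computation, namely that conjugation of $\bx_\beta(y)$ by $\bm(A)$ scales $y$ by $\det(A)$, combined with the restriction $\det(p(h))=1$ coming from the fact that $p$ lands in $\SL_2$. The only thing to check for cleanliness is that $\psi_U$ restricts to $\psi_N$ on $N$, which is true by the very definition $\psi_N:=\psi_U|_N$ made immediately before the lemma.
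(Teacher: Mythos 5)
Your proof is correct and follows exactly the route the paper intends: it deduces from the displayed relation $\bm(A)\bx_\beta(y)\bm(A)^{-1}=\bx_\beta(\det(A)y)$ that $\bm(p(h))$ centralizes $N$ because $\det(p(h))=1$, and then applies the Whittaker transformation property of $W$ together with $\psi_N=\psi_U|_N$. The paper states the lemma as an easy consequence of that same relation, so your argument simply supplies the one-line verification it leaves to the reader.
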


We now prove the following results.

\begin{prop}\label{multiplicity one}
Except for a finite number of $q^{-s}$, there is at most one trilinear form $T$ on $\CW(\pi, \psi_{U}) \times \omega_{\psi^{-1}}\times I(s,\eta,\psi^{-1})$ such that
$$T(\pi(n)W, \phi, f_s)=\psi_N(n)T(W, \phi, f_s), \forall n\in N$$
and $$T(\pi(\bm(p(h)))W, \omega_{\psi^{-1}}(h)\phi, r(h)f_s)=T(W, \phi, f_s), \forall h\in \widetilde \SL_2(F).$$
Here $r(h)$ denotes the right translation by $h$.
\end{prop}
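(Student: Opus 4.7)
My approach would be the standard one for uniqueness of local functional equations: reduce to a Hom-space at the parabolic level via Frobenius reciprocity, then exploit the uniqueness of the Whittaker model of $\pi$ together with the explicit structure of the Weil representation $\omega_{\psi^{-1}}$ under $\widetilde{P}_2$.

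First, regard $T$ as a $\widetilde{\SL}_2$-equivariant map
$$\mathcal{W}(\pi,\psi_U)\otimes \omega_{\psi^{-1}} \;\longrightarrow\; I(s,\eta,\psi^{-1})^{\vee}.$$
Since $I(s,\eta,\psi^{-1}) = \mathrm{Ind}_{\widetilde{P}_2}^{\widetilde{\SL}_2}(\mu_\psi \eta_{s-1/2})$ is a normalized induction and the contragredient of a normalized induction from a character is normalized induction from the inverse character, Frobenius reciprocity identifies the space of trilinear forms $T$ satisfying both equivariance conditions with
$$\mathrm{Hom}_{N \rtimes \widetilde{P}_2}\!\Bigl(\mathcal{W}(\pi,\psi_U)\otimes \omega_{\psi^{-1}},\; \psi_N \boxtimes (\mu_\psi \eta_{s-1/2})^{-1}\Bigr),$$
where $\widetilde{P}_2$ acts on the first factor through $\pi\circ\bm\circ p$, on the second via the Weil action, and $N$ acts trivially on $\omega_{\psi^{-1}}$. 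It therefore suffices to show that this Hom-space is at most one-dimensional for all but finitely many $q^{-s}$.

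Next, I would exploit the $\widetilde{N}_2$-equivariance to pin down the dependence on $\phi$. The Weil representation acts through $\bigl(\omega_{\psi^{-1}}(\bx(b))\phi\bigr)(x) = \psi^{-1}(bx^2)\phi(x)$, while $\bm(\bx(b)) = \bx_\alpha(b) \in U$ contributes the Whittaker factor $\psi_U(\bx_\alpha(b))=\psi(b)$; the product of these yields $\psi(b(1-x^2))$. Since $N_2$-equivariance is required for all $b \in F$, the resulting bilinear form must be supported on $\{x^2=1\}\subset \mathcal{S}(F)$, so the $\phi$-dependence is a scalar multiple of $\phi \mapsto \phi(1)$ (with the $\pm 1$ ambiguity absorbed by the cover $\mu_2$).

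Once the $\phi$-dependence is fixed, what remains is a linear functional on $\mathcal{W}(\pi,\psi_U)$ equivariant under the image $H := \bm(\SL_2)\cdot N \subset \SO_4$ transforming by a character $\chi_s$ built from $\mu_\psi$, $\eta$, and $|\cdot|^{s-1/2}$. The $\widetilde{T}_2$-equivariance forces $\chi_s$ to match the Whittaker character of $\pi$ restricted to the image $\bm(T_2)\subset T$; this character equation has a unique solution for generic $s$, and its failure (together with possible collisions with degenerate orbits in a geometric-lemma analysis) gives the finite set of $q^{-s}$ excluded in the statement. Uniqueness of the Whittaker model for $\pi$ then forces the resulting functional on $\mathcal{W}(\pi,\psi_U)$ to be a scalar multiple of evaluation at the identity, completing the proof.

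The hard part will be the careful bookkeeping of the metaplectic factor $\mu_\psi$, which enters in three places: as the inducing character of $I(s,\eta,\psi^{-1})$, through the formula for $\omega_{\psi^{-1}}$ on $\widetilde{T}_2$, and through the normalizations in Frobenius reciprocity. Tracking the Weil indices $\gamma(\psi)/\gamma(\psi_a)$ and the half-density shifts $|a|^{1/2}$ so as to verify that the net character on $\bm(T_2)$ agrees with the Whittaker character of $\pi$ is the most delicate step, and it is precisely this reconciliation that determines the finite set of excluded values of $q^{-s}$.
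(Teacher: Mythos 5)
Your plan follows the same route as the paper: dualize via Frobenius reciprocity to identify the space of such $T$ with $\Hom_{\widetilde P_2}(\pi_{N,\psi_N}\otimes\omega_{\psi^{-1}},\widetilde\eta_s^{-1})$, then use the explicit action of $N_2$ in the Weil representation to localize the $\phi$-dependence at $\{x^2=1\}$ and the uniqueness of the Whittaker functional to control the $W$-dependence. So the overall strategy is the right one and matches the paper's.

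The step that is genuinely under-specified, and slightly misattributed, is the origin of the finite exceptional set of $q^{-s}$. In the paper this comes from the Bernstein--Zelevinsky filtration of the Jacquet module $\pi_{N,\psi_N}$ restricted to $P_2$,
$$0\to \bigoplus_{\alpha\in F^\times/F^{\times,2}}\ind_{N_2}^{P_2}\bigl((\pi_{N,\psi_N})_{N_2,\psi_\alpha}\bigr)\to \pi_{N,\psi_N}\to (\pi_{N,\psi_N})_{N_2}\to 0,$$
together with the fact that $\pi_{N,\psi_N}$ is admissible and finitely generated, hence of finite length, so that the degenerate quotient $(\pi_{N,\psi_N})_{N_2}$ is \emph{finite-dimensional}. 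That quotient can only pair $\widetilde P_2$-equivariantly with $\omega_{\psi^{-1}}\otimes\widetilde\eta_s$ through the $N_2$-invariant functional $\phi\mapsto\phi(0)$, and since $T_2$ acts on a finite-dimensional space through finitely many characters, this contribution vanishes outside finitely many $q^{-s}$. The $\psi$-generic subquotient, by contrast, contributes a one-dimensional space for \emph{every} $s$ --- it is where the trilinear form actually lives --- so the exceptional set does not come from "the character equation on $\bm(T_2)$ having a unique solution for generic $s$" as you suggest; your phrase about "collisions with degenerate orbits" is the correct mechanism, but it needs to be made precise via the filtration and the finite-dimensionality statement, without which the "all but finitely many $q^{-s}$" clause is not proved. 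A second, smaller point: localizing $\phi$ at $\{x^2=1\}=\{\pm1\}$ a priori leaves a two-dimensional space of functionals $\phi\mapsto a\phi(1)+b\phi(-1)$; it is the residual $\widetilde T_2$-equivariance (e.g.\ under $\diag(-1,-1)$, using genuineness and the formula for $\omega_{\psi^{-1}}$ on the torus) that cuts this to one dimension, so "absorbed by the cover $\mu_2$" is the right instinct but should be spelled out.
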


\begin{proof}
A trilinear form $T$ as in the proposition defines an element in
\begin{align*} &\Hom_{\widetilde \SL_2}(\pi_{N,\psi_N}\otimes \omega_{\psi^{-1}}\otimes I(s,\eta,\psi^{-1}), \C)\\
=&\Hom_{\widetilde P_2}(\pi_{N,\psi_N}\otimes \omega_{\psi^{-1}}, \widetilde \eta_s^{-1}),
\end{align*}
where $\pi_{N,\psi_N}$ is the Jacquet module of $\pi$ with respect to $(N, \psi_N)$ and it is viewed as a representation of $\widetilde \SL_2$ by the projection $p$, and $\widetilde \eta_s=\mu_{\psi^{-1}}\eta_{s-1/2}$. 

We consider the representation $\pi_{N,\psi_N}$ of $\SL_2$. We have the exact sequence
$$0\ra \oplus_{\alpha \in F^\times/ F^{\times, 2}}\ind_{N_2}^{P_2} ((\pi_{N,\psi_N})_{N_2, \psi_{\alpha}})\ra \pi_{N,\psi_N} \ra (\pi_{N,\psi_N})_{N_2}\ra 0.$$
Here $\ind$ denotes non-normalized compact induction. Recall that $\psi_\alpha$ denotes the character of $F$ defined by $\psi_\alpha(x)=\psi(\alpha x)$. This exact sequence follows from a simple application of the general theory of \cite{BZ}; see \cite{U11}*{(1.1)} for a proof of a similar situation. Since the Jacquet functor preserve admissibility and sends a finitely generated representation to a finitely generated representation, $\pi_{N,\psi_N}$ has finite length as a representation of $\SL_2(F)$ and $(\pi_{N,\psi_N})_{N_2} $ is finite dimensional.  By the uniqueness of Whittaker model, we have $\dim (\pi_{N,\psi_N})_{N_2, \psi_{\alpha}} \le 1$ and by our assumption, we have $$\dim (\pi_{N,\psi_N})_{N_2, \psi} = \dim \pi_{U, \psi_{U}}=1 .$$
 Thus after excluding a finite number of $q^{-s}$, we have
\begin{align*}\Hom_{\widetilde P_2}(\pi_{N,\psi_N}\otimes \omega_{\psi^{-1}}, \tilde \eta_s)&=\oplus_{\alpha\in F^\times/F^{\times,2}}\Hom_{\widetilde P_2}(\ind_{N_2}^{P_2} (\psi_\alpha)\otimes \omega_{\psi^{-1}}, \widetilde \eta_s)\\
&=\oplus_{\alpha\in F^\times/F^{\times,2}}\Hom_{P_2}(\ind_{N_2}^{P_2}(\psi_\alpha), \omega_\psi\otimes \wt\eta_s^{-1})\\
&=\oplus_{\alpha\in F^\times/F^{\times,2}}\Hom_{N_2}(\psi_\alpha, \omega_\psi\otimes \wt \eta_s^{-1}).
\end{align*}
The result follows from the fact that $(\omega_\psi\otimes \wt\eta_s^{-1})_{N_2,\psi_\alpha}=0$ if $\alpha\ne 1$ and $\dim (\omega_\psi\otimes \wt\eta_s^{-1})_{N_2,\psi_\alpha}=1 $, which could be checked easily from the Weil representation formulas. 
\end{proof}

Let $M_s: I(s,\eta,\psi^{-1})\to I(1-s,\eta^{-1},\psi^{-1})$ be the standard intertwining operator defined by 
$$M_s(f_s)(g)=\int_{N_2}f_s(w_2ng)dn,$$
where recall that $w_2=\left( \begin{smallmatrix} &1\\ -1& \end{smallmatrix} \right)\in \SL_2(F)$. Consider the character $\chi$ of $F^\times$ defined by $\chi(a)=(a,-1)_F$.

\begin{cor}
There is a meromorphic function $\gamma(s,\pi, \wedge^2_+\times\chi\eta,\psi)$ such that
$$\Psi(W,\phi, M_s(f_s))=\gamma(s,\pi,\wedge^2_+\times\chi\eta)\Psi(W,\phi, f_s),$$
for all $W\in \CW(\pi,\psi), \phi\in \CS(F)$ and $f_s\in I(s,\eta,\psi)$.
\end{cor}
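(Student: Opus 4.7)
The plan is to deduce the corollary from the multiplicity-one Proposition~\ref{multiplicity one} by exhibiting both sides of the claimed identity, viewed as functions of $(W,\phi,f_s)$, as trilinear forms on $\CW(\pi,\psi_{U})\times \omega_{\psi^{-1}} \times I(s,\eta,\psi^{-1})$ satisfying the equivariance hypotheses of that proposition, and then invoking uniqueness up to scalars.

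First I would check that $\Psi(W,\phi,f_s)$ itself is such a trilinear form. The $N$-equivariance is a direct consequence of Lemma~\ref{lem: conjugation}: since $\det p(h)=1$, the element $\bm(p(h))$ centralizes $N$, so for $n\in N$ one has $\pi(n)W(\bm(p(h)))=W(\bm(p(h))n)=\psi_N(n)W(\bm(p(h)))$, and the remaining factors in the integrand are untouched. The $\widetilde{\SL}_2$-equivariance reduces to a change of variables $h\mapsto h\cdot p(g)$ in the integral over $N_2\backslash \SL_2(F)$, with the metaplectic cocycles hidden in $\omega_{\psi^{-1}}$ and $f_s$ cancelling against each other by the genuineness of both, so that the integrand descends to a well-defined function on $\SL_2(F)$.

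Next, since $M_s$ is a $\widetilde{\SL}_2$-intertwining operator, namely $M_s(r(h)f_s)=r(h)M_s(f_s)$, the composite $\tilde T(W,\phi,f_s):=\Psi(W,\phi,M_s(f_s))$ is again a trilinear form on the same space. The $N$-equivariance is inherited from $\Psi$ since $M_s$ acts only in the third slot. The $\widetilde{\SL}_2$-equivariance follows from the chain
\begin{align*}
\tilde T(\pi(\bm(p(h)))W,\omega_{\psi^{-1}}(h)\phi,r(h)f_s)
&=\Psi(\pi(\bm(p(h)))W,\omega_{\psi^{-1}}(h)\phi, r(h)M_s(f_s))\\
&=\Psi(W,\phi,M_s(f_s))\;=\;\tilde T(W,\phi,f_s),
\end{align*}
where the middle equality is the $\widetilde{\SL}_2$-equivariance of $\Psi$ applied to the section $M_s(f_s)\in I(1-s,\eta^{-1},\psi^{-1})$. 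Note that the integral formula defining $\Psi$ and its equivariance under $\widetilde{\SL}_2$ are purely features of the integral and do not depend on the specific inducing character, so the argument works equally well for sections of $I(1-s,\eta^{-1},\psi^{-1})$.

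Finally, by Proposition~\ref{multiplicity one}, outside a finite set of values of $q^{-s}$ the space of trilinear forms on $\CW(\pi,\psi_U)\times \omega_{\psi^{-1}}\times I(s,\eta,\psi^{-1})$ with the stated equivariance is at most one-dimensional, and by Lemma~\ref{lem: convergence} it is one-dimensional, spanned by $\Psi$. Hence $\tilde T=\gamma(s,\pi,\wedge^2_+\times\chi\eta,\psi)\cdot\Psi$ for a uniquely determined scalar $\gamma$, which is meromorphic in $q^{-s}$ because both $\Psi$ and $\tilde T$ are meromorphic by Lemma~\ref{lem: convergence} and basic properties of $M_s$. I do not expect a serious obstacle here; the only piece of bookkeeping is keeping track of the fact that $M_s$ moves from $I(s,\eta,\psi^{-1})$ to $I(1-s,\eta^{-1},\psi^{-1})$, so the equivariance of $\Psi$ is invoked in both parameters, and the appearance of the character $\chi$ in the name $\gamma(s,\pi,\wedge^2_+\times\chi\eta,\psi)$ reflects the Weil-index discrepancy $\mu_{\psi^{-1}}\leftrightarrow \mu_\psi^{-1}$ between the two sides of $M_s$, which can be verified against the unramified calculation.
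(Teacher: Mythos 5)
Your proposal is correct and follows the same route as the paper: both sides are shown, via Lemma~\ref{lem: conjugation} and the intertwining property of $M_s$, to be trilinear forms satisfying the equivariance hypotheses of Proposition~\ref{multiplicity one}, whence they are proportional, with meromorphy of the ratio coming from Lemma~\ref{lem: convergence}. Your write-up simply supplies more of the routine verification than the paper's two-line argument does.
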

\begin{proof}
Using Lemma \ref{lem: conjugation}, we can check that both trilinear forms $(W,\phi,f_s)\mapsto \Psi(W,\phi,f_s)$ and $(W,\phi,f_s)\mapsto \Psi(W,\phi,M_s(f_s))$ satisfy the conditions in Proposition \ref{multiplicity one}. Thus by the uniqueness of such trilinear forms, these two trilinear forms are proportional. Denote this proportion by $\gamma(s,\pi,\wedge^2_+\times \chi \eta,\psi)$ temporarily. By Lemma \ref{lem: convergence}, $\gamma(s,\pi,\wedge^2_+\times \chi \eta,\psi)$ is a meromorphic function of $s$.
\end{proof}
The notation $\gamma(s,\pi,\wedge^2_+\times \chi\eta,\psi)$ will be justified by the unramified calculation in the next subsection.

\subsection{Unramified calculation} Let $\pi$ be an unramified representation of $\SO_4(F)$ with Satake parameter $\diag(a,b,b^{-1},a^{-1})\in \SO_4(\C)$. Let $W^0$ be the unramified Whittaker function for $\pi$ normalized so that $W^0(I_4)=1$.
Let $a_k=t(\varpi^k,\varpi^{-k})$. By the Casselman-Shalika formula \cite{CS}*{Theorem 5.4}, we have 
$$W^0(a_k)=\frac{q^{-k}}{(a-b)(ab-1)}(a^{k+2}b^{-k+1}-a^{-k+1}b^{k+2}-a^{k+1}b^{-k}+a^{-k}b^{k+1})$$
if $k\ge 0$, and $W^0(a_k)=0$ if $k<0$. For $h\in  \SL_2(F)=N_2 T_2 K_2$, with $K_2=\SL_2(\fo_F)$, we can write $h=n\diag(a,a^{-1})k$. The Haar measure on the quotient $N_2\backslash \SL_2(F)$ is $dh=|a|^{-2}dk da. $ We also assume that $\psi$ is unramified. Thus we have $\mu_{\psi^{-1}}(u)=1$ for $u\in \fo_F^\times$. Let $\phi^0\in \CS(F)$ be the characteristic function of $\fo_F$ and let $f_s^0\in I(s,\eta,\psi^{-1})$ be the function such that $f(k)=1$ for $k\in K_2$.

 We have $$f_s^0(\diag(\varpi^k, \varpi^{-k}))=\mu_{\psi^{-1}}(\varpi^k)\eta(\varpi)^k|\varpi^k|^{s+1/2},$$ 
 and 
 $$(\omega_{\psi^{-1}}(\diag(\varpi^k, \varpi^{-k}))\phi^0)(1)=|\varpi^k|^{1/2}\mu_{\psi^{-1}}(\varpi^k).$$ 
Notice that $$\mu_{\psi^{-1}}(\varpi^k)\mu_{\psi^{-1}}(\varpi^k)=\mu_{\psi^{-1}}(\varpi^{2k})(\varpi^k, \varpi^k)=(\varpi^k,\varpi^k)=(\varpi,-1)^k=\chi(\varpi)^k.$$
Thus, we obtain
\begin{align*}
\Psi(W^0,\phi^0,f_s^0)&=\sum_{k=0}^{\infty}W^0(a_k)\eta(\varpi)^k q^{-ks}\chi(\varpi)^k\\
&=\frac{1}{(a-b)(ab-1)}\sum_k (a^{k+2}b^{-k+1}-a^{-k+1}b^{k+2}-a^{k+1}b^{-k}+a^{-k}b^{k+1})t^k\\
&=\frac{1+t}{(1-ab^{-1}t)(1-a^{-1}bt)}\\
&=\frac{1-t^2}{(1-t)(1-ab^{-1}t)(1-a^{-1}bt)},
\end{align*}
where $t=\chi(\varpi)\eta(\varpi)q^{-s}.$ Hence we get
$$\Psi(W^0, \phi^0, f_s^0)=\frac{L(s,\pi, \wedge^2_+\times \chi\eta)}{L(2s,\eta^2)}.$$
We summarize the above calculation in the following 
\begin{prop}
Assume that $\pi$ is an unramified representation of $\SO_4(F)$ with Satake parameter $\diag(a,b,b^{-1},a^{-1})$, $W^0\in \CW(\pi,\psi)$ is the unramified Whittaker function such that $W^0(I_4)=1$, $\phi^0\in \CS(F)$ is the characteristic function of $\fo_F$ and $f_s^0\in I(s,\eta,\psi^{-1})$ is the function such that $f_s^0(k)=1$ for $k\in \SL_2(\fo_F)$. Then we have $$\Psi(W^0, \phi^0, f_s^0)=\frac{L(s,\pi, \wedge^2_+\times \chi\eta)}{L(2s,\eta^2)}.$$
\end{prop}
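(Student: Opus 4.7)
The plan is to exploit the Iwasawa decomposition $\SL_2(F)=N_2 T_2 K_2$ with $K_2=\SL_2(\fo_F)$ and the full unramified data to reduce the zeta integral over $N_2\backslash \SL_2(F)$ to a one-variable sum over $T_2/T_2\cap K_2 \cong \Z$. Since $\phi^0$, $f_s^0$ and $\psi$ are all unramified and $W^0$ is right $\SO_4(\fo_F)$-invariant, each factor in the integrand is right $K_2$-invariant, so the $K_2$-integral contributes $\vol(K_2)=1$ (with standard normalization). With $dh=|a|^{-2}\,da\,dk$, we obtain
\begin{equation*}
\Psi(W^0,\phi^0,f_s^0)=\sum_{k\in \Z} W^0(\bm(\diag(\varpi^k,\varpi^{-k})))\cdot (\omega_{\psi^{-1}}(\diag(\varpi^k,\varpi^{-k}))\phi^0)(1)\cdot f_s^0(\diag(\varpi^k,\varpi^{-k}))\cdot q^{2k},
\end{equation*}
after observing $|a|^{-2}=q^{2k}$ at $a=\varpi^k$. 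Since $\bm(\diag(\varpi^k,\varpi^{-k}))=t(\varpi^k,\varpi^{-k})=a_k$, the Casselman--Shalika formula makes $W^0(a_k)$ vanish for $k<0$, so the sum is over $k\ge 0$.

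Next I would compute each of the three factors at $\diag(\varpi^k,\varpi^{-k})$. The value $W^0(a_k)$ for $k\ge 0$ is already recorded via Casselman--Shalika. For the Weil representation factor, the torus action formula gives $(\omega_{\psi^{-1}}(\diag(\varpi^k,\varpi^{-k}))\phi^0)(1)=|\varpi^k|^{1/2}\mu_{\psi^{-1}}(\varpi^k)\phi^0(\varpi^k)$, and since $\phi^0$ is the characteristic function of $\fo_F$, the value $\phi^0(\varpi^k)$ is $1$ for $k\ge 0$ (and we have already restricted to this range). For the inducing section, by the very definition of $f_s^0$ on $\widetilde T_2$ we have $f_s^0(\diag(\varpi^k,\varpi^{-k}))=\mu_{\psi^{-1}}(\varpi^k)\eta(\varpi)^k q^{-k(s+1/2)}$. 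Multiplying the two genuine factors produces $\mu_{\psi^{-1}}(\varpi^k)^2=\mu_{\psi^{-1}}(\varpi^{2k})(\varpi^k,\varpi^k)_F=(\varpi,-1)_F^k=\chi(\varpi)^k$, which is what turns $\eta$ into $\chi\eta$ in the final $L$-factor.

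Assembling the factors with $t=\chi(\varpi)\eta(\varpi)q^{-s}$, the $q^{-k}$ from Casselman--Shalika combined with $|\varpi^k|^{1/2} q^{-k(s+1/2)}$ and $|a|^{-2}=q^{2k}$ yields the clean combined weight $t^k$, and the sum becomes
\begin{equation*}
\Psi(W^0,\phi^0,f_s^0)=\frac{1}{(a-b)(ab-1)}\sum_{k\ge 0}\bigl(a^{k+2}b^{-k+1}-a^{-k+1}b^{k+2}-a^{k+1}b^{-k}+a^{-k}b^{k+1}\bigr)t^k.
\end{equation*}
Each of the four geometric series converges for $|t|$ small and sums to an explicit rational function of $t$; collecting terms and simplifying over the common factor $(a-b)(ab-1)$ yields $(1+t)/[(1-ab^{-1}t)(1-a^{-1}bt)]$, which I rewrite as $(1-t^2)/[(1-t)(1-ab^{-1}t)(1-a^{-1}bt)]$. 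Recognizing the denominator as $L(s,\pi,\wedge^2_+\times \chi\eta)^{-1}$ (using the recorded torus image of $\wedge^2_+$ evaluated at the Satake parameter $\diag(a,b,b^{-1},a^{-1})$) and the numerator as $L(2s,\eta^2)^{-1}$ (noting $\chi^2=1$, so $(\chi\eta)^2=\eta^2$), we obtain the stated identity.

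The essentially only non-mechanical step is the algebraic simplification of the four-term geometric sum into the product form; everything else is bookkeeping with the Iwasawa decomposition, Casselman--Shalika, and the Weil-representation cocycle identity $\mu_{\psi^{-1}}(a)\mu_{\psi^{-1}}(b)=\mu_{\psi^{-1}}(ab)(a,b)_F$. Subtlety to watch for: verifying that the restriction of $\psi_U$ to $N$ is genuinely $\psi_N(\bx_\beta(y))=\psi(-2y)$ in this unramified setting (so that $\psi_N$ is unramified and the Casselman--Shalika normalization applies without adjustment), which is guaranteed by our assumption that $\psi$ has conductor $\fo_F$.
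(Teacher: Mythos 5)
Your proof is correct and follows essentially the same route as the paper: Iwasawa decomposition reducing to a sum over $k\ge 0$, Casselman--Shalika for $W^0(a_k)$, the torus formulas for the Weil representation and the section, the cocycle identity producing $\mu_{\psi^{-1}}(\varpi^k)^2=\chi(\varpi)^k$, and the four-term geometric series summing to $(1+t)/[(1-ab^{-1}t)(1-a^{-1}bt)]$. Your power-of-$q$ bookkeeping (the $q^{-k}$ from Casselman--Shalika cancelling against $|a|^{-2}\,|\varpi^k|^{1/2}\,|\varpi^k|^{s+1/2}$ to leave exactly $t^k$) is in fact slightly more carefully stated than the paper's intermediate display.
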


\subsection{A global integral}
In this subsection, we present a global integral so that it is Eulerian and its local piece at a finite local place is the the local zeta integral defined in $\S$\ref{subsection: local zeta integral}. In this subsection, $F$ is a global field such that its characteristic is not $2$. Let $\BA$ be the ring of adeles of $F$. Let $\psi$ be a fixed additive character of $F\backslash \BA$.

Let $\eta$ be a quasi-character of $F^\times \backslash \BA^\times$, and $s\in \C$. We consider the global induced representation 
$$I(s,\eta,\psi^{-1})=\Ind_{\widetilde P_2(\BA)}^{\widetilde \SL_2(\BA)}(\mu_{\psi^{-1}} \eta_{s-1/2}).$$
Here $\mu_{\psi^{-1}}$ is a global version of the local $\mu_{\psi^{-1}}$ defined in $\S$\ref{subsection: Weil}.  For a standard section $f_s\in I(s,\eta,\psi^{-1})$, we consider the Eisenstein series
$$E(s,h,f_s)=\sum_{\gamma \in B_2(F)\backslash \SL_2(F)}f_s(\gamma h ).$$
It is standard that $E(s,h,f_s)$ converges for $\Re(s)\gg 0$ and has a meromorphic extension to $\C$. There is also a global Weil representation $\omega_\psi$ of $\wt \SL_2(\BA)$ on $\CS(\BA)$, the space of Bruhat-Schwatz functions on $\BA$. For $\phi\in \CS(\BA)$, we consider the theta series on $\widetilde \SL_2(\BA)$:
$$\theta(\phi)(g)=\sum_{x\in F}(\omega_{\psi}(g)\phi)(x).$$
We have
$$\theta(\phi)(g)=\theta_0(\phi)(g)+\sum_{a\in F^\times}|a|^{-1/2}\mu_{\psi}(a)^{-1}(\omega_{\psi}(t_{\SL_2}(a)g)\phi)(1),$$
where $\theta_0(\phi)(g)=\omega_{\psi}(g)\phi(0)$ is the contribution from $x=0$ in the theta series.

Let $\varphi$ be a cusp form on $\SO_4(F)\backslash \SO_4(\BA)$, $f_s$ be a standard section in $I(s,\eta,\psi^{-1})$, $\theta(\phi)$ be a theta series on $\widetilde \SL_2(\BA)$ associated to $\phi\in \CS(\BA)$. We consider the integral
$$Z(s,\varphi, \theta(\phi), f_s)=\int_{\SL_2(F)\backslash \SL_2(\BA)} \int_{N(F)\backslash N(\BA)} \varphi(n\bm(g) )\psi'(n)dn \theta(\phi)(g)E(s,g,f_s)dg.$$
By the definition of $E$, we get
\begin{align*}
Z(s,\varphi,\theta(\phi),f_s)&=\int_{B_2(F)\backslash \SL_2(\BA)}\int_{N(F)\backslash N(\BA)}\varphi(n\bm(g))\psi'(n)dn \theta(\phi)(g)f_s(g)dg.
\end{align*}
Now plugging in the definition of $\theta(\phi)$, and note that the contribution from $\theta_0(\phi)$ is 
\begin{equation*}
\int_{N_2(\BA) T_2(F)\backslash \SL_2(\BA)}\int_{N_2(F)\backslash N_2(\BA)}\int_{N(F)\backslash N(\BA)} \varphi(n\bm(u) \bm(g))\psi'(n) \omega_\psi(g)\phi(0)   f_s(g)dndudg,
\end{equation*}
which equals zero because $\varphi$ is a cusp form. Thus after absorbing the sum over $a\in F^\times$, we have
\begin{align*}
&Z(s,\varphi,\theta(\phi),f_s)\\
&\qquad =\int_{N_2(F)\backslash \SL_2(\BA)}\int_{N(F)\backslash N(\BA)}\varphi(n\bm(g))\psi'(n)dn (\omega_\psi(g)\phi)(1)f_s(g)dg\\
&\qquad =\int_{N_2(\BA)\backslash \SL_2(\BA)} \int_{N_2(F)\backslash N_2(\BA)}\int_{N(F)\backslash N(\BA)} \varphi(n\bm(u)\bm(g))\psi'(n)dn (\omega_\psi(ug)\phi)(1)f_s(ug)du dg\\
&\qquad =\int_{N_2(\BA)\backslash \SL_2(\BA)} \int_{N_2(F)\backslash N_2(\BA)}\int_{N(F)\backslash N(\BA)} \varphi(n\bm(u)\bm(g))\psi'(n)dn \psi(u)du (\omega_\psi(g)\phi)(1)f_s(g) dg\\
&\qquad =\int_{N_2(\BA)\backslash \SL_2(\BA)}\int_{U(F)\backslash U(\BA)} \varphi(u\bm(g))\psi_U(u)du (\omega_\psi(g)\phi)(1)f_s(g) dg\\
&\qquad =\int_{N_2(\BA)\backslash \SL_2(\BA)} W_\varphi^{\psi_U^{-1}}(\bm(g)) (\omega_\psi(g)\phi)(1)f_s(g) dg.
\end{align*}
Thus the global integral is Eulerian and its local piece is exactly the local integral defined in \S\ref{subsection: local zeta integral}.


\section{Partial Bessel functions} \label{section-partial-bessel-functions}

Write $G=\SO_4(F)$ and $Z=\wpair{\pm I_4}$ the center of $G$. In this section, we assume that $\psi$ is an unramified character of $F$. For a character $\omega$ of $Z$, let $C_c^\infty(G,\omega)$ be the space of compactly supported smooth functions $f$ on $G$ such that $f(zg)=\omega(z)f(g)$ for all $z\in Z, g\in G$. Moreover, denote $C^\infty(G,\psi_U,\omega)$ the space of smooth functions $W$ on $G$ such that $W(zug)=\omega(z)\psi_U(u)W(g)$ for all $z\in Z, u\in U, g\in G$. Here the smoothness of $W$ means that for each $W$, there exists an open compact subgroup $K$ of $G$ such that $W(gk)=W(g)$ for all $g\in G, k\in K$.

\subsection{Howe vectors}
\label{subsection-howe-vectors}
We recall the theory of Howe vectors as developed in \cite{Ba95}. For a positive integer $m$, let $K_m=(1+\Mat_{4\times 4}(\fp^m))\cap \SO_4(F)$. Define a character $\tau_m$ of $K_m$ by 
$$\tau_m(k)=\psi(\varpi^{-2m}(k_{12}-2k_{13})).$$
One can check that $\tau_m$ is indeed a character of $K_m$. 

Let $d_m=t(\varpi^{-2m},1).$ Consider the subgroup $H_m=d_m K_m d_m^{-1}$. Define $\psi_m(h)=\tau_m(d_m^{-1}h d_m)$ for $h\in H_m$. Let $U_{m}=U\cap H_m$. We then have 
$$U_{m}=\wpair{u(x,y): x,y\in \fp^{-m}}$$  and $\psi_m|_{U_m}=\psi_U|_{U_m}$.

For a positive integer $m$ and an element $W\in C^\infty(G,\psi_U,\omega)$ with $W(1)=1$, following \cite{Ba95}, we consider 
\begin{equation}\label{eq: defn of Howe}W_m(g)=\frac{1}{\vol(U_m)}\int_{U_m}W(gu)\psi_m^{-1}(u)du. \end{equation}

For a fixed $W$, let $C$ be a positive integer such that $W$ is invariant under the right translation by $K_C$, then a function $W_m$ with $m\ge C$ is called a \textbf{Howe} vector. It is known that $W_m(1)=1$. In particular, $W_m\ne 0$. Moreover, for $m\ge C$, we have 
\begin{equation}\label{partial bessel}
W_m(ugh)=\psi_U(u)\psi_m(h)W_m(g), \forall u\in U, h\in H_m, g\in G.
\end{equation}
Because of \eqref{partial bessel}, the functions $W_m$, $m\ge C$, are called partial Bessel functions. For a proof of the statements in this subsection, see \cite{Ba95}.

\subsection{Partial Bessel function and Bruhat order}
\label{subsection-partial-bessel}
Let $\pi$ be an irreducible generic supercuspidal representation of $G$ with central character $\omega$. Let $\CM(\pi)$ be the space of matrix coefficients of $\pi$. Then we have $\CM(\pi)\subset C_c^\infty(G)$. For $f\in \CM(\pi)$, we consider 
$$W^f(g)=\int_U f(ug)\psi^{-1}_U(u)du.$$
Note that the above integral makes sense because $Ug$ is closed in $G$ and $f$ has compact support in $G$. We have $W^f\in C^\infty(G,\psi_U,\omega)$. Moreover, since $\pi$ is generic, there exists an $f\in \CM(\pi)$ such that $W^f(1)=1$. For a positive integer $m$, we can consider the function 
$$\CB_m(g,f):=(W^f)_m(g), g\in G.$$

For a Weyl element $w\in \bW$ of $G$, we denote $C(w)=BwB$. Recall the Bruhat order on $\bW$ is defined as $w_1\ge w_2$ if and only if $C(w)\subset \ov{C(w_1)}$. For $w\in \bW$, we consider the open set 
$$\Omega_w=\cup_{w'\ge w}C(w)$$
of $G$. For $w\in \bW$, we consider $A_w=\wpair{t\in T| \gamma(t)=1, \forall \gamma\in \Delta \textrm{ with } w\gamma>0}.$ Note that the Bruhat order is particularly simple for $\SO_4$. We recall the following result of \cite{CST}, specializing to our case when $G=\SO_4$.

\begin{lem}[{\cite{CST}*{Lemma 5.13}}] \label{CST}
Let $w\in \bW$, $m>0$ and $f\in C_c^\infty(\Omega_w, \omega)$. Suppose that $\CB_m(wa, f)=0$ for all $a\in A_w$. Then there exists a function $f_0\in C_c^\infty(\Omega_w-C(w),\omega)$ such that for sufficiently large $m $ depending only on $f$, we have $\CB_m(g,f)=\CB_m(g,f_0)$ for all $g\in G$.
\end{lem}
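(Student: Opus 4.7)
The plan is to construct a function $f_1 \in C_c^\infty(C(w), \omega)$ whose partial Bessel function agrees with $\CB_m(\cdot, f)$ on all of $G$ for $m$ sufficiently large; then $f_0 := f - f_1$ will be supported on $\Omega_w - C(w)$ and will satisfy $\CB_m(g, f) = \CB_m(g, f_0)$ as required. The construction proceeds via a smooth partition of unity on $\mathrm{supp}(f)$: write $f = f_{\mathrm{rest}} + \sum_i f_i$, where $f_{\mathrm{rest}} \in C_c^\infty(\Omega_w - C(w), \omega)$ and each $f_i$ is supported in a small Bruhat-type neighborhood $V_i$ of a point $w a_i$ with $a_i \in A_w$. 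Such neighborhoods exist because $C(w)$ admits a Bruhat parametrization of the form $U \cdot w \cdot T \cdot U_w$ for a suitable unipotent subgroup $U_w \subset U$, and because the simple roots $\gamma$ with $w\gamma > 0$ act trivially on $A_w$ by definition.

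For each piece $f_i$, I would then compute $\CB_m(g, f_i)$ directly from the definition \eqref{eq: defn of Howe}. When $m$ is large enough that $K_m$ is finer than the right-invariance of every $f_i$ and that the Iwahori factorization cuts out the neighborhoods $V_i$ cleanly, the Howe average over $U_m$ collapses: $\CB_m(g, f_i)$ vanishes unless $g \in U \cdot w a_i \cdot H_m$, in which case one has the transformation rule $\CB_m(u \cdot w a_i \cdot h, f_i) = \psi_U(u)\, \psi_m(h)\, \CB_m(w a_i, f_i)$. The defining property of $A_w$ --- namely $\gamma(a_i) = 1$ for every simple root $\gamma$ with $w\gamma > 0$ --- is used exactly here, to ensure the compatibility of $\psi_U$ on the left with $\psi_m$ on the right under conjugation by $w a_i$.

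From the hypothesis $\CB_m(wa, f) = 0$ for all $a \in A_w$, and the fact that distinct pieces of the partition contribute disjointly at the points $w a_i$ once $m$ is sufficiently large, we deduce $\CB_m(w a_i, f_i) = 0$ for each $i$. By the local rigidity statement of the previous paragraph this forces $\CB_m(g, f_i) = 0$ for all $g \in G$, and hence $\CB_m(g, f) = \CB_m(g, f_{\mathrm{rest}})$. Taking $f_0 := f_{\mathrm{rest}}$, which depends only on $f$ and the choice of partition (not on $m$), completes the proof.

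The principal obstacle is the local rigidity of partial Bessel functions near $C(w)$: one must verify that, for sufficiently large $m$, the partial Bessel function of a piece $f_i$ supported in a small neighborhood of $w a_i$ is completely determined by its value $\CB_m(w a_i, f_i)$ via the explicit formula above. This requires careful bookkeeping with the Iwahori factorization of elements in $g U_m \cap V_i$ and the compatibility of characters under Ad-conjugation, which is precisely where the definition of $A_w$ plays its essential role. Once this rigidity is established, the remainder of the argument is a standard partition-of-unity manipulation.
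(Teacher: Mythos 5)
First, note that the paper does not prove this statement at all: it is quoted verbatim from Cogdell--Shahidi--Tsai \cite{CST}*{Lemma 5.13} and used as a black box, so there is no internal proof to compare against. Your attempt must therefore be judged on its own merits, and it contains a genuine gap at exactly the point you flag as ``the principal obstacle.'' The local rigidity claim --- that for $f_i$ supported in a small neighborhood $V_i$ of $wa_i$ and $m$ large, $\CB_m(\cdot,f_i)$ is supported in $U\,wa_i\,H_m$ and hence determined by the single value $\CB_m(wa_i,f_i)$ --- is false. From the definition one only gets $\supp\CB_m(\cdot,f_i)\subset U\cdot\supp(f_i)\cdot U_m$. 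Since $V_i$ must be a \emph{fixed} full-dimensional open neighborhood of $wa_i$ (if the $V_i$ shrink with $m$, the remainder $f_{\mathrm{rest}}$ is no longer a fixed function supported off $C(w)$), while $H_m=d_mK_md_m^{-1}$ shrinks in the torus and negative-root directions as $m\to\infty$, one has $V_i\not\subset wa_iH_m$ for large $m$, and $U V_i U_m$ meets every cell $C(w')$ with $w'\ge w$. So $\CB_m(\cdot,f_i)$ is in general nonzero on the larger cells inside $\Omega_w$, and nothing about the vanishing of $\CB_m(wa_i,f_i)$ (or even of $\CB_m(wa,f_i)$ for all $a\in A_w$) forces it to vanish there. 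Your conclusion $\CB_m(g,f_i)=0$ for all $g\in G$ is strictly stronger than what the lemma asserts, and it is false: if it were true, the lemma would conclude $\CB_m(g,f)=\CB_m(g,f_{\mathrm{rest}})$ with the cell contribution annihilated outright, whereas the actual content of the lemma is only that this contribution can be \emph{relocated} to a function supported on the higher cells $\Omega_w-C(w)$. The entire difficulty of \cite{CST}*{Lemma 5.13} lies in controlling how a function supported near $C(w)$ leaks into the partial Bessel function on the cells above $w$, and your argument assumes this leakage away.

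Two smaller issues. The opening step proposes $f_1\in C_c^\infty(C(w),\omega)$; since $C(w)=BwB$ is locally closed but not open in $G$ (unless $w$ is the long Weyl element), a locally constant function supported in $C(w)$ is identically zero, so this space is trivial --- what you actually use later are neighborhoods of points of $C(w)$, and the writeup should say so from the start. Second, your reduction uses the hypothesis only at the finitely many points $wa_i$, discarding the vanishing of $\CB_m(wa,f)$ for all other $a\in A_w$; any correct proof must exploit the full hypothesis, since $\CB_m(w\cdot,f)\big|_{wA_w}$ is a genuinely varying function of $a$ that is not determined by finitely many of its values.
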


\subsection{Several preparation results}
In this subsection, we collect several preparation results which will be used in the proof of our local converse theorem.

Recall that $N_2=\{\bn(x)=\left(\begin{smallmatrix} 1 & x \\ & 1\end{smallmatrix}\right), x\in F \}$ is the upper triangular unipotent subgroup of $\SL_2(F)$. Let $\ov N_2 =\{\bar \bn(x) =\left(\begin{smallmatrix} 1 &  \\ x& 1\end{smallmatrix}\right), x\in F \}$ be the lower triangular unipotent subgroup of $\SL_2(F)$. Let $N_{2,m}=\begin{pmatrix} 1& \fp^{-m}\\ &1\end{pmatrix}$ and $\ov N_{2,m}=\begin{pmatrix}1&\\ \fp^{3m} &1 \end{pmatrix}$. 
 Note that $\ov N_2$ and $N_2$ splits in $\widetilde \SL_2(F)$. Moreover, for any $g_1\in N_2$ and $g_2\in \ov N_2$, we have $c(g_1,g_2)=1$. In fact, if $g_1=\bn(y)$ and $g_2=\bar \bn(x)$ with $x\ne 0$, we have $\bx(g_1)=1$ and $\bx(g_2)=x$, and thus
$$c(g_1,g_2)=( 1,x)_F(-x, x)_F=1.$$

For an integer $i$, we consider the following section $f_s^i\in I(s,\eta, \psi^{-1})$ defined by
$$f_s^i((g,\epsilon))=\left\{\begin{array}{lll}\epsilon \gamma_{\psi^{-1}}(a)\eta_{s+1/2}(a),& \textrm{ if } g=\bn(b)t_{\SL_2}(a) \bar \bn(x), \textrm{ with }  a\in F^\times, b\in F, \epsilon\in \wpair{\pm1}, x\in \fp^{3i}, \\ 0, &\textrm{ otherwise.} \end{array}\right.$$
Note that the support of $f_s^i$ is $\widetilde B_2 \ov N_{2,i}$, which is open in $\widetilde\SL_2(F)$. Thus $f_s^i$ is well-defined.

\begin{lem}[{\cite{CZ}*{Lemma 3.8}}]\label{lem: sections}
\begin{enumerate}
\item There exists an integer $i_2$ such that for all $i\ge i_2$, $f_s^i$ defines a section in $I(s,\eta,\psi^{-1})$.
\item Let $X$ be an open compact subset of $N_2$, then there exists an integer $I(X,\eta)$ such that for all $i\ge I(X,\eta)$, we have
$$\tilde f_s^i(w_2 x)=\vol(\ov N_{2,i})=q^{-3i}$$
for all $x\in X$, where $\tilde f_s^i=M_s(f_s^i)$.
\end{enumerate}
\end{lem}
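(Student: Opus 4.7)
The plan is to verify the two claims separately. For (1), note that the big Bruhat cell $B_2 \ov N_2 \subset \SL_2(F)$ is open and admits the unique factorization $g = \bpm a & b \\ 0 & a^{-1} \epm \bar\bn(x)$ with $a = g_{22}^{-1}$, $b = g_{12}$, $x = g_{21}/g_{22}$, so the formula unambiguously defines a function on the open subset $\widetilde{P_2}\, \ov N_{2,i}$ of $\widetilde \SL_2(F)$, extended by zero elsewhere. Genuineness is built in via the $\epsilon$ factor. To check left $\widetilde P_2$-equivariance, I would take $(p,\zeta) = (t_{\SL_2}(a')\bn(b'),\zeta) \in \widetilde P_2$ and compute $(p,\zeta)(g,\epsilon) = (pg, \zeta \epsilon\, c(p,g))$; the new big-cell decomposition has Levi parameter $a'a$ and the same $\ov N_2$-factor $\bar\bn(x)$. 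The identity to verify reduces, via $\mu_{\psi^{-1}}(a'a)=\mu_{\psi^{-1}}(a')\mu_{\psi^{-1}}(a)(a',a)_F$, to a Hilbert-symbol identity. The role of $i_2$ is to ensure that the stray cocycle contribution $(a',x)_F$ coming from $c(p,\bar\bn(x))$ fits the normalization convention for $\gamma_{\psi^{-1}}$ when $x \in \fp^{3i}$. Smoothness of the section then follows from openness of the support and right-invariance under a sufficiently small compact subgroup.

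For (2), I would write
\[
\tilde f_s^i(w_2\bn(x_0)) = \int_F f_s^i(w_2\bn(y)w_2\bn(x_0))\, dy, \qquad \bn(x_0) \in X,
\]
and use $w_2\bn(y)w_2^{-1} = \bar\bn(-y)$ together with $w_2^2 = -I_2$ to rewrite the argument as $-\bar\bn(-y)\bn(x_0)$. A direct matrix computation yields the big-cell decomposition
\[
-\bar\bn(-y)\bn(x_0) = \bpm -(1-yx_0)^{-1} & -x_0 \\ 0 & -(1-yx_0) \epm \bar\bn\!\left(\frac{-y}{1-yx_0}\right),
\]
so the integrand is nonzero exactly when $c := -y/(1-yx_0) \in \fp^{3i}$. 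Reparametrizing by $c$ (so $y = -c/(1-cx_0)$, with Jacobian $|1-yx_0|^{-2}$), the Levi parameter becomes $a(c) = -(1-cx_0)$. I would then pick $I(X,\eta)$ large enough so that for every $i \geq I(X,\eta)$, every $c \in \fp^{3i}$, and every $\bn(x_0) \in X$: (a) $|cx_0|<1$, so the change of variables is a bijection with Jacobian $1$; (b) $a(c)$ lies in a neighborhood of $-1$ small enough that $\gamma_{\psi^{-1}}(a(c))\eta(a(c))|a(c)|^{s+1/2}$ equals its value at $a=-1$, absorbing both $\cond(\eta)$ and the ramification of the metaplectic character. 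The integral then collapses to a constant multiple of $\vol(\ov N_{2,i}) = q^{-3i}$, and the normalization convention in the definition of $\gamma_{\psi^{-1}}$ is precisely what makes this constant equal to $1$.

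The main obstacle is (1): faithfully tracking the metaplectic cocycle $c(p,\bar\bn(x))$, which produces the extra Hilbert symbol $(a',x)_F$ that the equivariance axiom demands be neutralized for $x \in \fp^{3i}$ with $i \geq i_2$. Once (1) is in place, (2) is a direct change-of-variables calculation; the only subtlety is choosing $I(X,\eta)$ large enough to render the integrand locally constant across the entire compact set $X$.
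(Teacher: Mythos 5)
Your overall strategy is the one used in the cited source \cite{CZ}*{Lemma 3.8} (the present paper only quotes the result): unfold $M_s$, move $N_2$ across $w_2$ into $\ov N_2$, write the explicit $2\times 2$ Bruhat decomposition of the resulting element, observe that the support condition on $f_s^i$ forces the $\ov N_2$-coordinate into $\fp^{3i}$ and the Levi parameter into a shrinking neighborhood of a fixed scalar, and then take $i$ large relative to the conductors of $\eta$ and $\mu_{\psi^{-1}}$ so that the integrand is constant on a set of volume $q^{-3i}$. Your matrix identity for $-\bar\bn(-y)\bn(x_0)$ and the change of variables $y\mapsto c$ are correct, and your outline of (1) raises the right points (tracking the cocycle and using $\mu_{\psi^{-1}}(ab)=\mu_{\psi^{-1}}(a)\mu_{\psi^{-1}}(b)(a,b)_F$).

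The last step of (2), however, has a genuine gap. Taking the displayed formula $M_s(f_s)(g)=\int_{N_2}f_s(w_2ng)dn$ at face value, you correctly pick up $w_2^2=-I_2$, so the Levi parameter tends to $-1$ and the limiting value of the integrand is $\epsilon\,\mu_{\psi^{-1}}(-1)\,\eta(-1)$, where $\epsilon$ is the metaplectic cocycle sign of $(w_2,1)(\bn(y),1)(w_2,1)(\bn(x_0),1)$ --- a sign you do not track in (2) even though $f_s^i$ is genuine. No ``normalization convention in the definition of $\gamma_{\psi^{-1}}$'' can make this constant equal to $1$: for any $\eta$ with $\eta(-1)=-1$ (e.g.\ a ramified quadratic character, which certainly occurs in the application in Proposition~\ref{prop: main}) your computation yields $-q^{-3i}$ rather than $q^{-3i}$. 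The clean identity is what comes out of the convention $\int_{N_2}f_s(w_2^{-1}ng)dn$ used in \cite{CZ}: then $w_2^{-1}\bn(y)w_2\,\bn(x_0)=\bar\bn(-y)\bn(x_0)$ with no extra $-I_2$, the Levi parameter equals $1-cx_0\in 1+\fp^{c_0}$ on the support (so $\eta$, $|\cdot|^{s+1/2}$ and $\gamma_{\psi^{-1}}$ all evaluate to $1$ once $i\ge I(X,\eta)$), and the cocycle is trivial because $c(\bar\bn,\bn)=1$ as recorded just before the lemma. You should either adopt that normalization or carry the constant $\epsilon\,\mu_{\psi^{-1}}(-1)\eta(-1)$ honestly; as written, the final assertion that the constant equals $1$ fails.
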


Let $\phi^m$ be the characteristic function of $1+\fp^m$. We have the following
\begin{lem}[\cite{CZ}*{Lemma 3.9}] \label{lem: Weil representation} We have
\begin{enumerate} 
\item $\omega_{\psi^{-1}}(n)\phi^m=\psi^{-1}(n) \phi^m$ for all $n\in N_{2,m}$.
\item $\omega_{\psi^{-1}}(\bar n)\phi^m=\phi^m$ for all $\bar n\in \ov N_{2,m}.$
\item $\omega_{\psi^{-1}} (w_2)\phi^m (a)=\gamma(\psi^{-1})\psi^{-1}(2a)q^{-m}$.
\end{enumerate}
\end{lem}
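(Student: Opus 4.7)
All three parts follow by direct computation from the Weil-representation formulas of Section~\ref{subsection: Weil}, using that $\psi$ is unramified and that the residue characteristic is odd (so $2\in \fo_F^\times$). For (1), take $n=\bn(b)$ with $b\in \fp^{-m}$. The formula gives $\omega_{\psi^{-1}}(n)\phi^m(x)=\psi^{-1}(bx^2)\phi^m(x)$. On the support $1+\fp^m$ of $\phi^m$, write $x=1+u$ with $u\in \fp^m$; then $b(x^2-1)=b(2u+u^2)\in \fp^{-m}\fp^m\subset \fo_F$, hence $\psi^{-1}(bx^2)=\psi^{-1}(b)=\psi^{-1}(n)$.

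For (3), the formula yields $\omega_{\psi^{-1}}(w_2)\phi^m(a)=\gamma(\psi^{-1})\int_{1+\fp^m}\psi^{-1}(2ay)dy$. The substitution $y=1+u$ with $u\in \fp^m$ factors out $\psi^{-1}(2a)$, and because $\psi$ is unramified the remaining integral over $\fp^m$ equals $\vol(\fp^m)=q^{-m}$ on the range of $a$ for which the inner character is trivial on $\fp^m$.

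For (2), I would use the matrix identity $\bar\bn(c)=w_2\bn(-c)w_2^{-1}$, which gives
\[\omega_{\psi^{-1}}(\bar\bn(c))\phi^m=\omega_{\psi^{-1}}(w_2)\,\omega_{\psi^{-1}}(\bn(-c))\,\omega_{\psi^{-1}}(w_2^{-1})\phi^m.\]
The metaplectic cocycle vanishes on the pairs of factors appearing here, by a short check using the explicit formula for $c(g_1,g_2)$ recalled in Section~\ref{subsection: Weil} (of which the observation $c(g_1,g_2)=1$ for $g_1\in N_2$, $g_2\in \ov N_2$ stated just before the lemma is the key case), so the decomposition is valid in $\widetilde\SL_2(F)$. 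A Fourier-transform computation parallel to~(3) shows that $h:=\omega_{\psi^{-1}}(w_2^{-1})\phi^m$ is supported in $\fp^{-m}$. For $c\in \fp^{3m}$ and $y\in \fp^{-m}$ one has $cy^2\in \fp^{3m}\fp^{-2m}\subset \fo_F$, whence $\psi^{-1}(-cy^2)=1$ and therefore $\omega_{\psi^{-1}}(\bn(-c))h=h$. Applying $\omega_{\psi^{-1}}(w_2)$ to both sides returns $\phi^m$, proving~(2).

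The main obstacle is the bookkeeping in~(2): one must verify the vanishing of the metaplectic cocycle on each pair of factors in the composition, and track the Weil indices introduced by $w_2$ and $w_2^{-1}$ so that they cancel. Once that is in hand, the statement reduces to the support calculation indicated above, after which the gain $3m$ in the definition of $\ov N_{2,m}$ is seen to match exactly the factor of~$2$ in the exponent $y^2$ coming from the Schr\"odinger-model formula.
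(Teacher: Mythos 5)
This lemma is not proved in the paper at all --- it is imported verbatim from \cite{CZ}*{Lemma 3.9} --- so there is no internal argument to compare with; your computation is the standard Schr\"odinger-model verification and it is correct. Two remarks. (i) The cocycle check you defer in part (2) does close: with $\bx(w_2)=-1$, $\bx(\bn(-c))=1$, $\bx(w_2\bn(-c))=-1$ and $\bx(\bar\bn(c))=c$ (or $1$ if $c=0$), the explicit formula gives $c(w_2,\bn(-c))=(-1,1)_F(1,-1)_F=1$, $c(w_2\bn(-c),w_2^{-1})=(-1,1)_F(1,c)_F=1$, and $c(w_2,w_2^{-1})=1$; hence $(\bar\bn(c),1)=(w_2,1)(\bn(-c),1)(w_2^{-1},1)$ holds in $\widetilde\SL_2(F)$, which is exactly what your factorization of $\omega_{\psi^{-1}}(\bar\bn(c))$ requires. (The $N_2\times\ov N_2$ vanishing quoted before the lemma is not the case you actually need here, but the same two-line computation does the job.) (ii) Your hedge on part (3) is the right reading of the statement: $\omega_{\psi^{-1}}(w_2)\phi^m(a)=\gamma(\psi^{-1})\int_{1+\fp^m}\psi^{-1}(2ay)\,dy$ equals $\gamma(\psi^{-1})\psi^{-1}(2a)q^{-m}$ precisely when $2a\fp^m\subset\fo_F$, i.e.\ for $a\in\fp^{-m}$ when $p$ is odd, and vanishes otherwise, so the displayed identity carries an implicit support restriction. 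One tiny correction to your closing sentence: your own support computation shows $c\in\fp^{2m}$ already suffices in part (2), so the exponent $3m$ in $\ov N_{2,m}$ is not ``exactly matched'' by the square in the Schr\"odinger formula --- it is more than enough, being dictated by the Howe-vector subgroups $H_m$ rather than by this lemma.
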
 

\subsection{A local converse theorem of Hazeltine-Liu and Haan-Kim-Kwon}\label{subsection-HL-Theorem}
In this subsection, we recall the local converse theorem of Hazeltine and Liu for the split group $\SO_{2n}(F)$, which is realized by $J_{2n}$, namely, $\SO_{2n}(F)=\wpair{g\in \SL_{2n}(F): g^t J_{2n}g=J_{2n}}$.  The outer automorphism $c$ of split $\SO_{2n}$ can be realized by the matrix 
$c=\diag( I_{n-1},J_2, I_{n-1}).$
Given an irreducible generic representation $\pi$ of $\SO_{2n}(F)$ and an irreducible generic representation $\tau$ of $\GL_k(F)$, one can associate a local gamma factor $\gamma(s,\pi\times \tau,\psi)$; see \cite{Kaplan}. In \cite{HL: p-adic} and \cite{HKK}, Hazeltine-Liu and Haan-Kim-Kwon proved the following local converse theorem for $\SO_{2n}(F)$ independently.
\begin{thm}[\cite{HL: p-adic, HKK}]\label{HL theorem}
Let $\pi,\pi'$ be two irreducible $\psi$-generic representations of $\SO_{2n}(F)$ with the same central character. If $\gamma(s, \pi\times \tau,\psi)=\gamma(s, \pi'\times \tau,\psi)$ for any irreducible generic representation $\tau$ of $\GL_k(F)$ with $1\le k\le n$, then either $\pi\cong \pi^\prime$ or $\pi\cong c\cdot \pi^\prime$.
\end{thm}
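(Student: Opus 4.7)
My plan is to adapt the partial Bessel function method of Cogdell--Shahidi--Tsai (see Lemma \ref{CST}) to the split even orthogonal setting, carefully tracking how the outer automorphism $c$ interacts with the local Rankin--Selberg integrals. The strategy is: (i) reduce the theorem to an equality of partial Bessel functions (up to the $c$-action); (ii) propagate this equality cell by cell along the Bruhat order using the twisted gamma factors for $\GL_k$ with $k<n$; (iii) at the Siegel Weyl element, where only $\GL_n$-twists are available, show that the gamma factor only ``sees'' the $c$-symmetrized Bessel values, which accounts for the two-fold ambiguity in the conclusion.

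\textbf{Step 1: Reduction to Bessel function equality.} First I would reduce to the supercuspidal case via a standard Langlands-quotient / multiplicativity-of-gamma-factors argument, so that matrix coefficients are compactly supported modulo center and $\CM(\pi),\CM(\pi')$ are genuinely available in the sense of Section \ref{section-partial-bessel-functions}. Normalize $f_\pi\in\CM(\pi)$, $f_{\pi'}\in\CM(\pi')$ so that $W^{f_\pi}(1)=W^{f_{\pi'}}(1)=1$, and form the partial Bessel functions $\CB_m(g,f_\pi)$ and $\CB_m(g,f_{\pi'})$. It suffices to prove that for all sufficiently large $m$ one has either $\CB_m(g,f_\pi)=\CB_m(g,f_{\pi'})$ on all of $G$ or $\CB_m(g,f_\pi)=\CB_m(g,c\cdot f_{\pi'})$ on all of $G$, since either option forces the Whittaker models to coincide and hence $\pi\cong\pi'$ or $\pi\cong c\cdot\pi'$.

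\textbf{Step 2: Induction on the Bruhat order via $\GL_k$-twists, $k<n$.} I would proceed by induction on the Bruhat order. At a Weyl element $w\in\bW$, suppose that $\CB_m(g,f_\pi)=\CB_m(g,f_{\pi'})$ (or the $c$-twisted version) has been established on $\Omega_w\setminus C(w)$ for large $m$. By Lemma \ref{CST}, the inductive step amounts to matching the values $\CB_m(wa,f_\pi)$ and $\CB_m(wa,f_{\pi'})$ for $a\in A_w$. For every $w$ whose ``non-degenerate support'' lies in a proper Levi of type $\GL_k$ with $k<n$, these values are controlled by the local functional equation for $\SO_{2n}\times\GL_k$: substituting the Howe vectors $(W^{f_\pi})_m,(W^{f_{\pi'}})_m$ together with a suitable test section built from a Whittaker function for an irreducible generic representation $\tau$ of $\GL_k(F)$, both sides of the functional equation unfold to finite sums in which $\CB_m(wa,\cdot)$ appears as a coefficient. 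Letting $\tau$ vary and using density of the Whittaker model of $\tau$, the hypothesis $\gamma(s,\pi\times\tau,\psi)=\gamma(s,\pi'\times\tau,\psi)$ converts into the desired equality at $wa$.

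\textbf{Step 3: The Siegel cell and the outer automorphism --- the main obstacle.} The essential difficulty is at the Weyl elements $w$ attached to the Siegel parabolic $P=MN$ with $M\cong\GL_n$, where only twists by generic representations of $\GL_n(F)$ are at one's disposal. The key observation is that both the Rankin--Selberg (equivalently Langlands--Shahidi) local integral and the resulting local gamma factor $\gamma(s,\pi\times\tau,\psi)$ for $\SO_{2n}(F)\times\GL_n(F)$ are invariant under the outer automorphism $c$: conjugation by $c$ preserves the Whittaker datum $(U,\psi_U)$, preserves the Siegel Levi as a whole, and induces a measure-preserving involution on the unfolding domain compatible with a suitable involution on the $\GL_n$-side. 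Consequently, plugging the Howe vectors of $\pi$ into the functional equation, the $\GL_n$-twisted gamma factor depends only on the symmetrized combination $\tfrac12\bigl(\CB_m(wa,f_\pi)+\CB_m(wa,c\cdot f_\pi)\bigr)$. The hypothesis therefore yields equality of these symmetrized quantities rather than of $\CB_m(wa,f_\pi)$ and $\CB_m(wa,f_{\pi'})$ individually; a case analysis at each Siegel-type cell then forces either $\CB_m(wa,f_\pi)=\CB_m(wa,f_{\pi'})$ or $\CB_m(wa,f_\pi)=\CB_m(wa,c\cdot f_{\pi'})$. Feeding these two branches back into the Bruhat induction of Step 2 yields the two alternatives $\pi\cong\pi'$ or $\pi\cong c\cdot\pi'$. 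The hardest technical point is precisely to check that the $c$-invariance of the integral exhausts the only source of ambiguity, and that the branch choices at different Siegel cells can be pinned down consistently --- this is what occupies the bulk of \cite{HL: p-adic, HKK}.
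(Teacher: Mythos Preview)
The paper does not prove Theorem~\ref{HL theorem}; it is quoted without proof from the references \cite{HL: p-adic, HKK} and used as a black box in the proof of Theorem~\ref{converse theorem}. So there is no ``paper's own proof'' to compare your proposal against.

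That said, your sketch is a plausible outline of the partial Bessel function approach used in those references, and Steps~1--2 are accurate in spirit. Step~3, however, contains a genuine gap as written. From the $c$-invariance of the $\SO_{2n}\times\GL_n$ integral you correctly deduce that the hypothesis only constrains the symmetrized quantity $\CB_m(wa,f_\pi)+\CB_m(wa,c\cdot f_\pi)$ to equal $\CB_m(wa,f_{\pi'})+\CB_m(wa,c\cdot f_{\pi'})$. But an equality $A+B=C+D$ does not by itself force $A=C$ or $A=D$; your ``case analysis'' sentence does not explain what additional input singles out these two branches, nor why the choice of branch is globally consistent across all torus points $a\in A_w$ and across the various Siegel-type cells. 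You acknowledge this is ``the hardest technical point'' and defer to \cite{HL: p-adic, HKK}, but that means your proposal is an outline rather than a proof: the mechanism that actually resolves the ambiguity (in \cite{HL: p-adic} it is an analysis of the Bessel support together with the structure of the relevant Weyl elements under $c$; in \cite{HKK} a descent-theoretic argument) is precisely what is missing.
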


\section{On a refined local converse theorem for $\SO_4$} \label{section-proof}

In this section, we prove Theorem~\ref{converse theorem}.

\subsection{A preparation step} \label{subsection: key}
Let $\pi$ and $\pi'$ be two irreducible $\psi$-generic supercuspidal representations of $\SO_4(F)$ with the same central character $\omega$. Take $f\in \CM(\pi)$ (resp. $f'\in \CM(\pi')$) such that $W^f(1)=W^{f'}(1)=1$. Note that there are many such choices of $f$ and $f'$. We then can consider the Howe vectors $W^{f}_m$ (resp. $W^{f'}_m$) and partial Bessel functions $\CB_m(g,f)$ (resp. $\CB_m(g,f)$). We will show that $\CB_m(g,f)=\CB_m(g,f')$ under the assumption of Theorem \ref{converse theorem}.

We start from the following lemma, which is a direct application of Lemma \ref{CST}.
\begin{lem}\label{lem: first decomposition}
There exist functions $f_\alpha\in C_c^\infty( \Omega_{s_\alpha},\omega)$ and $f_\beta\in C_c^\infty( \Omega_{s_\beta},\omega)$ such that 
$$\CB_m(g,f)-\CB_m(g,f')=\CB_m(g,f_\alpha)+\CB_m(g,f_\beta)$$
for $m$ large enough, which only depend on $f,f'$.
\end{lem}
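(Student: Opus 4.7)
The plan is to apply Lemma~\ref{CST} to the difference of the matrix coefficients, then split the resulting support using the Bruhat stratification of $\SO_4$.

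Set $F := f - f'$ and view it as an element of $C_c^\infty(G,\omega)$. The partial Bessel construction is linear in its second argument, so
\[
\CB_m(g, F) = \CB_m(g, f) - \CB_m(g, f')
\]
for every $m$ larger than a threshold depending only on $f$ and $f'$. To apply Lemma~\ref{CST} at the trivial Weyl element $w = 1$, I first verify the vanishing hypothesis on $A_1$. Since both simple roots satisfy $1\cdot \alpha, 1\cdot\beta > 0$, we have $A_1 = \{t\in T : \alpha(t) = \beta(t) = 1\}$; the conditions $a_1 = a_2$ and $a_1 a_2 = 1$ on $t(a_1,a_2)$ force $a_1 = a_2 = \pm 1$, hence $A_1 = \{I_4, -I_4\} = Z$. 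Using the normalizations $W^f(1) = W^{f'}(1) = 1$ together with the fact that $W^F \in C^\infty(G,\psi_U,\omega)$ and the Howe vector formula \eqref{eq: defn of Howe}, a one-line calculation gives $\CB_m(z, F) = \omega(z) W^F(1) = \omega(z)(W^f(1) - W^{f'}(1)) = 0$ for every $z \in Z$ and every sufficiently large $m$. This is the step where the shared central character assumption of Theorem~\ref{converse theorem} is used.

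Lemma~\ref{CST} applied to $F \in C_c^\infty(\Omega_1, \omega) = C_c^\infty(G,\omega)$ with $w = 1$ then produces $f_0 \in C_c^\infty(\Omega_1 \setminus C(1), \omega) = C_c^\infty(G \setminus B, \omega)$ such that $\CB_m(g, F) = \CB_m(g, f_0)$ for all $g \in G$ and all $m$ larger than a threshold depending only on $f, f'$. The Weyl group $\bW = \{1, s_\alpha, s_\beta, s_\alpha s_\beta\}$ of $\SO_4$ is a diamond: $s_\alpha$ and $s_\beta$ are incomparable and $s_\alpha s_\beta$ is the longest element, so
\[
G \setminus B = \Omega_{s_\alpha} \cup \Omega_{s_\beta}, \qquad \Omega_{s_\alpha} \cap \Omega_{s_\beta} = C(s_\alpha s_\beta),
\]
with both $\Omega_{s_\alpha}$ and $\Omega_{s_\beta}$ open. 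Using that $G$ is totally disconnected and $\supp(f_0)$ is compact, I would refine this open cover of $\supp(f_0)$ to a finite partition by open-compact sets, each contained in $\Omega_{s_\alpha}$ or in $\Omega_{s_\beta}$, and decompose $f_0 = f_\alpha + f_\beta$ accordingly; linearity of $\CB_m$ then yields the desired identity with $f_\alpha \in C_c^\infty(\Omega_{s_\alpha},\omega)$ and $f_\beta \in C_c^\infty(\Omega_{s_\beta},\omega)$.

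The principal obstacle is the vanishing check on $A_1$: it is the one step where the representation-theoretic hypothesis (shared central character) feeds into the geometric machinery of Lemma~\ref{CST}. The final cell-wise partition is a routine open-compact partition-of-unity construction in the non-archimedean topology, and the smallness of $\bW$ in the $\SO_4$ case makes the Bruhat bookkeeping essentially trivial compared with the higher-rank situation.
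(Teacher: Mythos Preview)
Your proposal is correct and follows essentially the same approach as the paper: verify the vanishing hypothesis of Lemma~\ref{CST} at $w=1$ using $A_1=Z$ and the shared central character, obtain $f_0\in C_c^\infty(G\setminus B,\omega)$, then split $f_0$ along the open cover $\Omega_{s_\alpha}\cup\Omega_{s_\beta}$ via a partition of unity. Your write-up is in fact somewhat more explicit than the paper's (you compute $A_1$ and describe the Bruhat stratification concretely), but the argument is the same.
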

\begin{proof}
Recall that for $w\in \bW$, we have defined $A_w$ in Section~\ref{subsection-partial-bessel}.
Write the trivial element in $\bW$ by 1. Then $A_1$ is the center $Z$ of $\SO_4(F)$. By the assumption on the central character, we have $\CB_m(z,f-f')=0$ for all $z\in Z$ and $m$ large. Thus by Lemma \ref{CST}, there is a function $f_0\in C_c^\infty(\Omega_1-C(1),\omega)$ such that 
$$\CB_m(g,f-f')=\CB_m(g,f_0). $$
Note that $\Omega_1-C(1)=\Omega_{s_\alpha}\cup \Omega_{s_\beta}$. Thus a partition of unity argument will give functions $f_\alpha\in C_c^\infty(\Omega_{s_\alpha},\omega)$ and $f_\beta\in C_c^\infty( \Omega_{s_\beta},\omega)$ such that 
$$\CB_m(g,f-f')=\CB_m(g,f_0)=\CB_m(g,f_\alpha)+\CB_m(g,f_\beta), $$
for $m$ large. 
\end{proof}

\begin{lem}\label{lem: stable partial bessel function}
\begin{enumerate}
\item For $m$ large and for any $t\in T$ and $r\in F$, we have $$\CB_m(ts_\alpha \bx_\alpha(r), f_\beta)=0.$$
\item For $m$ large and for any $t\in T$ and $r\in F$ with $\bx_{\alpha}(r)\notin U_m$, we have 
$$\CB_m(ts_\alpha \bx_\alpha(r),f_\alpha)=0.$$
\end{enumerate}
\end{lem}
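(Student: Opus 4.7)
The plan is to treat the two parts separately: part (1) is a direct Bruhat support argument, while part (2) combines a Howe-vector Fourier expansion with the compactness of $\supp f_\alpha$. For part (1), note that $g := t s_\alpha \bx_\alpha(r) \in C(s_\alpha)$ for every $t \in T$ and $r \in F$. Unfolding the smoothing integral and the definition of $W^{f_\beta}$ gives
\[
\CB_m(g, f_\beta) = \frac{1}{\vol(U_m)} \int_{U_m} \int_U f_\beta(u' g u)\, \psi_U^{-1}(u')\, \psi_m^{-1}(u)\, du'\, du.
\]
For all $u' \in U$ and $u \in U_m \subset U$, the element $u' g u$ lies in $U g U \subset U T s_\alpha U = C(s_\alpha)$, since Bruhat cells are stable under two-sided multiplication by $U$. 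As $\supp f_\beta \subset \Omega_{s_\beta} = C(s_\beta) \cup C(s_\alpha s_\beta)$ is disjoint from $C(s_\alpha)$, the integrand vanishes identically.

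For part (2) a naive support argument fails because $g \in \Omega_{s_\alpha}$. Writing a general element $u = u(v_1, v_2) \in U_m$ and using $\bx_\alpha(r)\bx_\alpha(v_1) = \bx_\alpha(r+v_1)$ together with the fact that $s_\alpha$ fixes the root $\beta$ in the type-$A_1 \times A_1$ root system of $\SO_4$ (so $s_\alpha \bx_\beta(y) s_\alpha^{-1} = \bx_\beta(\epsilon y)$ for a fixed sign $\epsilon \in \{\pm 1\}$), I would push $\bx_\beta(v_2)$ to the left of $t s_\alpha$ to obtain
\[
g u = \bx_\beta(\epsilon\, \beta(t)\, v_2) \cdot t s_\alpha \bx_\alpha(r + v_1).
\]
Left $U$-equivariance of $W^{f_\alpha}$ extracts the character $\psi_U(\bx_\beta(\epsilon \beta(t) v_2)) = \psi(-2\epsilon \beta(t) v_2)$, and combining with $\psi_m^{-1}(u) = \psi(-v_1 + 2v_2)$ separates the double integral into a product. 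The inner $v_2$-integral becomes $\int_{\fp^{-m}} \psi(2(1 - \epsilon \beta(t)) v_2)\, dv_2$, which by the standard unramified character integration (using that the residue characteristic is odd so that $2$ is a unit) vanishes unless $\beta(t) \in \epsilon + \fp^m$, and otherwise contributes a nonzero constant.

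In the surviving case, the change of variable $v = r + v_1$ rewrites the remaining integral as a nonzero multiple of $\psi(r) \int_{r + \fp^{-m}} \psi(-v)\, W^{f_\alpha}(t s_\alpha \bx_\alpha(v))\, dv$. The decisive input is that the function $v \mapsto W^{f_\alpha}(t s_\alpha \bx_\alpha(v))$ is compactly supported in $v$, uniformly in $t$: indeed, $\supp f_\alpha$ is compact modulo $Z$ in $\Omega_{s_\alpha}$, and under the Bruhat homeomorphism $U \times T \times \bx_\alpha(F) \to C(s_\alpha)$ the $\bx_\alpha(F)$-coordinate of $\supp f_\alpha \cap C(s_\alpha)$ must project onto a fixed compact set $K_3 \subset F$, say $K_3 \subset \fp^{-N_0}$. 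For $m \geq N_0$, the hypothesis $\bx_\alpha(r) \notin U_m$ (that is, $r \notin \fp^{-m}$) together with the ultrametric inequality yields $r + \fp^{-m} \cap \fp^{-m} = \emptyset$, and hence $r + \fp^{-m} \cap K_3 = \emptyset$, so the integrand vanishes. The main obstacle is identifying and exploiting this uniform compactness of the $\bx_\alpha$-coordinate of $\supp f_\alpha$; once that is granted, the rest is routine additive character manipulation.
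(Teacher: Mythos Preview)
Your proof is correct and rests on the same two ingredients the paper uses: for (1), the Bruhat-cell disjointness $C(s_\alpha)\cap\Omega_{s_\beta}=\emptyset$; for (2), the fact that $\supp f_\alpha\cap C(s_\alpha)$ is compact (since $C(s_\alpha)$ is closed in $\Omega_{s_\alpha}$) and hence projects to a bounded set in the $\bx_\alpha$-coordinate under the Bruhat isomorphism $B\times F\to C(s_\alpha)$.

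The only real difference is in the bookkeeping for part (2). The paper unfolds \emph{both} the Howe averaging and the $U$-integral defining $W^{f_\alpha}$, writes $u'=\bx_\alpha(s_1)\bx_\beta(s_2)$, pushes $\bx_\beta(s_2)$ leftward through $s_\alpha$ into the $U$-integration variable, and then observes directly that $f_\alpha(utu''s_\alpha\bx_\alpha(r+s_1))=0$ whenever $r+s_1\notin F_c$. This shows the \emph{integrand} vanishes pointwise, with no character computation needed. Your route keeps $W^{f_\alpha}$ intact and first separates off the $v_2$-integral as an additive character sum yielding the condition $\beta(t)\in\epsilon+\fp^m$. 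That step is correct but entirely superfluous: your $v_1$-argument already kills the whole expression regardless of the value of the $v_2$-integral, so the case split and the aside about odd residue characteristic can simply be dropped. In short, the paper's version is a one-line pointwise vanishing, while yours reaches the same conclusion after a harmless detour.
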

\begin{proof}
(1) Note that $ts_\alpha \bx_\alpha (r)\in Bs_\alpha B$ while $\Omega_{s_\beta}=Bs_\beta B\cup Bs_\alpha s_\beta B$. Thus $ ts_\alpha \bx_\alpha(r)\notin \Omega_{s_\beta}$. The result follows.

(2) Note that $Bs_\alpha B$ is closed in $\Omega_{s_\alpha}=Bs_\alpha B\cup Bs_\alpha s_\beta B$. Thus $\supp(f_\alpha)\cap Bs_\alpha B$ is compact. Note that the map 
$$B\times F\to Bs_\alpha B$$
$$(b,r)\to b s_\alpha \bx_\alpha(r)$$
is a homeomorphism. Thus there exists an open compact subset $B_c\subset B$ and $F_c\subset F$ such that if $f_\alpha(bs_\alpha \bx_{\alpha}(r))\ne 0$, then $b\in B_c$ and $r\in F_c$. We take $m$ large enough so that $\bx_\alpha(F_c)\subset U_m$. We remind the reader that $U_m$ was defined in Section~\ref{subsection-howe-vectors}. Note that this choice of $m$ only depends on $f_\alpha$. We then have 
\begin{align*}
\CB_m(ts_\alpha \bx_\alpha(r),f_{\alpha})&=\frac{1}{\vol(U_m)}\int_{U\times U_m}\psi_U^{-1}(uu')f_{\alpha}(uts_\alpha \bx_\alpha(r)u')dudu'.
\end{align*}
For $u'\in U_m$, we can write $u'=\bx_\alpha(s_1)\bx_\beta(s_2)$. Note that $s_\alpha \bx_\beta(s_2)=u''s_\alpha$ for some $u''\in U$. Thus
$$f_{\alpha}(uts_\alpha \bx_\alpha(r)u')=f_\alpha(utu'' s_\alpha \bx_{\alpha}(r+s_1)). $$
Note that if $\bx_\alpha(r)\notin U_m$, then $\bx_\alpha(r+s_1)\notin U_m$ for $\bx_\alpha(s_1)\in U_m$. Thus $f_{\alpha}(uts_\alpha \bx_\alpha(r)u')=0 $ for any $u'\in U_m$ under the assumption $\bx_\alpha(r)\notin U_m$. Thus $\CB_m(ts_\alpha \bx_\alpha(r),f_{\alpha})=0. $
\end{proof}

\begin{prop}\label{prop: main}
Assume that $\gamma(s,\pi,\wedge^2_+\times \eta,\psi)=\gamma(s,\pi',\wedge^2_+\times \eta,\psi)$ for every quasi-character $\eta$ of $F^\times$, then we have 
$$\CB_m(g, f)=\CB_m(g,f'), \forall g\in Bs_\alpha B,$$
for $m$ large depending on $f$ and $f'$.
\end{prop}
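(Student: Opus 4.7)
The plan is to use the local functional equation, applied to a cleverly chosen family of test data involving Howe vectors, to isolate the values $W^f_m(ts_\alpha) - W^{f'}_m(ts_\alpha)$ for $t \in A_{s_\alpha}$, and then invoke the Bruhat-order machinery of Section~\ref{section-partial-bessel-functions} to promote this pointwise vanishing on $A_{s_\alpha} s_\alpha$ to vanishing on the whole cell $Bs_\alpha B$.

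First I would reduce the problem as follows. By Lemma~\ref{lem: first decomposition}, $\CB_m(g,f)-\CB_m(g,f')=\CB_m(g,f_\alpha)+\CB_m(g,f_\beta)$, and Lemma~\ref{lem: stable partial bessel function}(1) kills the $f_\beta$ contribution on $Bs_\alpha B$. So it suffices to prove $\CB_m(g,f_\alpha)=0$ on $Bs_\alpha B$. I would apply Lemma~\ref{CST} with $w=s_\alpha$: if $\CB_m(s_\alpha a,f_\alpha)=0$ for every $a\in A_{s_\alpha}$, there is an $f_0\in C_c^\infty(\Omega_{s_\alpha}-C(s_\alpha),\omega)=C_c^\infty(Bs_\alpha s_\beta B,\omega)$ with $\CB_m(g,f_\alpha)=\CB_m(g,f_0)$ for $m$ large. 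For $g\in Bs_\alpha B$ a support argument ($UgU_m\subset Bs_\alpha B$, disjoint from $Bs_\alpha s_\beta B$) forces $\CB_m(g,f_0)=0$. Thus the entire proposition reduces to showing $\CB_m(s_\alpha a,f_\alpha)=0$ for all $a\in A_{s_\alpha}$, and since Lemma~\ref{lem: stable partial bessel function}(1) gives $\CB_m(s_\alpha a,f_\beta)=0$ as well, the target becomes
$$W^f_m(s_\alpha a)=W^{f'}_m(s_\alpha a),\qquad \forall a\in A_{s_\alpha}.$$

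Next, I would exploit the identifications $\bm(w_2)=s_\alpha$ and $\bm(t_{\SL_2}(c))=t(c,c^{-1})\in A_{s_\alpha}$, so that the local zeta integral evaluates $W$ precisely on elements of the form $\bm(t_{\SL_2}(c)w_2\bn(x))=t(c,c^{-1})s_\alpha\bx_\alpha(x)$. Applying the functional equation to $W^f_m-W^{f'}_m$ with test data $\phi=\phi^m$ and $f_s=f_s^i$ (Lemmas~\ref{lem: Weil representation} and~\ref{lem: sections}), and using the hypothesis that $\gamma(s,\pi,\wedge^2_+\times\chi\eta,\psi)=\gamma(s,\pi',\wedge^2_+\times\chi\eta,\psi)$ for all $\eta$, gives
$$\Psi(W^f_m-W^{f'}_m,\phi^m,M_s(f_s^i))=\gamma(s,\pi,\wedge^2_+\times \chi\eta,\psi)\cdot \Psi(W^f_m-W^{f'}_m,\phi^m,f_s^i).$$
On the left, the explicit description of $M_s(f_s^i)$ near the big cell from Lemma~\ref{lem: sections}(2), combined with the Bruhat decomposition $\SL_2=N_2T_2\sqcup N_2T_2w_2N_2$, localizes the integral to the small cell $N_2\backslash N_2T_2$; there $W^f_m-W^{f'}_m$ already vanishes because $W^f_m,W^{f'}_m$ share a central character and agree on the Borel by the standard Howe-vector-on-torus computation (Baruch-type). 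Thus the left-hand side vanishes, forcing $\Psi(W^f_m-W^{f'}_m,\phi^m,f_s^i)=0$. On the right, using the decomposition $t_{\SL_2}(c)w_2\bn(x)=t_{\SL_2}(-c/x)\bn(-x)\bar\bn(1/x)$ for $x\ne 0$ and the partial Bessel property~\eqref{partial bessel} to absorb the $\bn(-x)$ into the character, together with Lemma~\ref{lem: Weil representation} for the Weil-representation factor, the integral collapses to
$$\int_{F^\times}\bigl(W^f_m-W^{f'}_m\bigr)(t(c,c^{-1})s_\alpha)\,\eta(c)\,\Phi_{i,m}(c)\,dc,$$
for an explicit nonvanishing kernel $\Phi_{i,m}$ with support independent of $\eta$. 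Varying $\eta$ over quasi-characters of $F^\times$ and applying Mellin inversion (equivalently, the fact that a Schwartz function on $F^\times$ whose Mellin transform vanishes identically must vanish) forces $(W^f_m-W^{f'}_m)(t(c,c^{-1})s_\alpha)=0$ for every $c\in F^\times$, completing the proof.

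The main obstacle will be the third step: carrying out the explicit manipulation of the zeta integral with Howe-vector test data and verifying that the left-hand side after $M_s$ really localizes to the small Bruhat cell. In particular one has to choose $m$ and $i$ compatibly so that (a) the Howe-vector support conditions coming from $\phi^m$ and the partial Bessel transformation~\eqref{partial bessel} are simultaneously active, and (b) the intertwining-operator support estimates in Lemma~\ref{lem: sections} hold on the relevant open compact. The Mellin inversion at the end is standard but requires enough freedom in $\eta$ to separate points of $F^\times$; this is where the "for every quasi-character $\eta$" hypothesis is used in an essential way.
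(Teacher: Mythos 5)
Your overall architecture --- reduce via Lemmas \ref{lem: first decomposition} and \ref{lem: stable partial bessel function} and Lemma \ref{CST} to showing $\CB_m(t(a,a^{-1})s_\alpha,f-f')=0$ for all $a\in F^\times$, extract this from the local functional equation with Howe-vector test data, and finish by Mellin inversion --- is exactly that of the paper, and your reduction and endgame are fine. But the central analytic step is carried out with the two sides of the functional equation interchanged, and with the stated test data $(\phi^m,f_s^i)$ the computations you describe would fail. The section $f_s^i$ is supported on $\wt B_2\,\ov N_{2,i}$, a small neighbourhood of the closed cell in $N_2\backslash\SL_2$; consequently $\Psi(W_m,\phi^m,f_s^i)$ only sees $W_m$ on the torus (one has $\bm(t_{\SL_2}(a)\bar\bn(y))=t(a,a^{-1})\bx_{-\alpha}(y)$ with $\bx_{-\alpha}(y)\in H_m$ for $y\in\fp^{3i}$), and it equals $q^{-3i-m}$ for both $\pi$ and $\pi'$. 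This side therefore carries no information about $W_m(ts_\alpha)$: if you insist on parametrizing it by the big cell, the support condition $f_s^i(t_{\SL_2}(c)w_2\bn(x))\neq 0$ forces $1/x\in\fp^{3i}$, i.e.\ $|x|\geq q^{3i}$, so $\bx_\alpha(x)\notin U_m$; there the partial Bessel property cannot ``absorb $\bn(-x)$'' (that requires $\bx_\alpha(x)\in U_m$), and Lemma \ref{lem: stable partial bessel function} already annihilates the difference, so your claimed collapse of $\Psi(\cdot,f_s^i)$ to a Mellin transform of $(W^f_m-W^{f'}_m)(t(c,c^{-1})s_\alpha)$ is false --- that integral vanishes for trivial reasons and Mellin inversion yields nothing. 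Symmetrically, $M_s(f_s^i)$ does \emph{not} localize the integral to the small cell: Lemma \ref{lem: sections}(2) computes $\wt f_s^i(w_2x)=q^{-3i}\neq 0$ for $x$ in a compact subset of $N_2$, i.e.\ precisely on the big cell, which contradicts your justification for the vanishing of the left-hand side.

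The correct information flow is the reverse of yours. One first computes $\Psi((W^f)_m,\phi^m,f_s^i)=\Psi((W^{f'})_m,\phi^m,f_s^i)=q^{-3i-m}$ on the small cell (this is where the fact that Howe vectors of representations with the same central character agree on $B$ enters); the equality of gamma factors then transfers this to $\Psi((W^f)_m,\phi^m,M_s(f_s^i))=\Psi((W^{f'})_m,\phi^m,M_s(f_s^i))$. It is the $M_s(f_s^i)$ side, evaluated on $N_2T_2w_2N_2$ --- with the contribution from $x\notin\fp^{-m}$ cancelling in the difference by Lemma \ref{lem: stable partial bessel function}, and with Lemma \ref{lem: sections}(2) and Lemma \ref{lem: Weil representation}(3) handling the remaining kernel --- that produces the Mellin transform of $\CB_m(t(a,a^{-1})s_\alpha,f-f')\psi^{-1}(2a)\chi(a)$. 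With the roles of $f_s^i$ and $M_s(f_s^i)$ swapped in your third step, your argument becomes the paper's proof.
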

The proof given below is indeed similar to the proof of \cite{CZ}*{Theorem 3.10}. For completeness, we still give the full details. 
\begin{proof}
Write $W_m$ for the Whittaker function $(W^f)_m=\CB_m(\cdot, f)$ or $(W^{f'})_m=\CB_m(\cdot, f')$. For a quasi-character $\eta$ of $F^\times$, we consider $f_s^i\in I(s,\eta,\psi^{-1})$ defined in Lemma \ref{lem: sections} and $\phi^m\in \CS(F)$, the characteristic function of $1+\fp^m$ as in Lemma \ref{lem: Weil representation}. Then we consider the integral $\Psi(W_m,\phi^m,f_s^i)$. This integral could be computed on the dense subset $N_2\backslash N_2 T_2 \ov N_2\subset N_2\backslash \SL_2$. Notice that $\bm(\bar \bn(x))=\bx_{-\alpha}(x)$ and $\bm(t_{\SL_2}(a))=t(a,a^{-1})$. We have
\begin{align*}
&\Psi(W_m, \phi^m, f_s^i)\\
&\quad =\int_{F^\times \times F}W_m(t(a,a^{-1})\bx_{-\alpha}(x)) (\omega_{\psi^{-1}}(t_{\SL_2}(a)\bar\bn(x)))\phi^m(1) f_s^i(t_{\SL_2}(a)\bar \bn(x))|a|^{-2}dx da\\
 &\quad=\int_{F^\times \fp^{3i}} W_m(t(a,a^{-1})  \bx_{-\alpha}(x))(\omega_{\psi^{-1}}(t_{\SL_2}(a)\bar \bn(x)))\phi^m(1) \gamma_{\psi^{-1}}(a)\eta_{s+1/2}(a) |a|^{-2}dx da.
\end{align*}

For $i\ge m$, we have $\bx_{-\alpha}(x)\in H_m$. Thus by \eqref{partial bessel} and Lemma \ref{lem: Weil representation}, we have
$$W_m(t_{\SL_2}(a) \bar \bn(x))=W_m(t_{\SL_2}(a)),  (\omega_{\psi^{-1}}(t_{\SL_2}(a)\bar \bn(x)))\phi^m(1)=(\omega_{\psi^{-1}}(t_{\SL_2}(a)))\phi^m(1).$$
Since $ (\omega_{\psi^{-1}}(t_{\SL_2}(a)))\phi^m(1)=\mu_{\psi^{-1}}(a)|a|^{1/2} \phi^m(a)$ and $\mu_{\psi^{-1}}(a)\mu_{\psi^{-1}}(a)=(a,a)=(a,-1)=\chi(a)$, we get
\begin{align}
\label{eq-value-of-local-integral}
\Psi(W_m, \phi^m, f_s^i)&=q^{-3i} \int_{F^\times}W_m(t(a,a^{-1}))(\omega_{\psi^{-1}}(t_{\SL_2}(a)))\phi^m(1) \gamma_{\psi^{-1}}(a)\eta_{s+1/2}(a)|a|^{-2}da \\  \nonumber
&=q^{-3i}\int_{1+\fp^m}W_{m}(t(a,a^{-1}))\chi(a)\eta_{s-1}(a)da\\  \nonumber
&=q^{-3i-m}, 
\end{align}
if $m\ge \cond(\chi\eta)$. In particular, we have 
\begin{equation}\label{eq: left side}\Psi((W^f)_m,\phi^m, f_s^i)=\Psi((W^{f'})_m,\phi^m, f_s^i).\end{equation}
By the assumption on the local gamma factors and the local functional equation, \eqref{eq: left side} implies that
\begin{equation}\label{eq: right side}
\Psi((W^f)_m,\phi_m, \wt f_s^i)=\Psi((W^{f'})_m,\phi_m, \wt f_s^i).
\end{equation}
The integral $\Psi(W_m, \phi^m, \wt f_s^i)$ can be taken on the open dense subset $N_2\backslash N_2 T_2 w_2 N_2$ of $N_2\backslash \SL_2$. Notice that $\bm(w_2)=s_\alpha$ and $\bm(\bn(x))=\bx_{\alpha}(x)$. We have
\begin{align*}
&\Psi(W_m, \phi^m,\tilde f_s^i)\\
\quad&=\int_{F^\times \times F}W_m(t(a,a^{-1})s_\alpha \bx_\alpha(x))(\omega (t_{\SL_2}(a)w_2 \bn(x)))\phi^m(1) \tilde f_s^i(t_{\SL_2}(a)w_2 \bn(x))|a|^{-2} dx da\\
\quad&=\int_{F^\times \times \fp^{-m}} W_m(t(a,a^{-1})w_2 \bx_\alpha(x))(\omega_{\psi^{-1}} (t_{\SL_2}(a)w_2 \bn(x)))\phi^m(1) \tilde f_s^i(t_{\SL_2}(a)w_2 \bn(x))|a|^{-2} dx da\\
\quad &+\int_{F^\times \times (F-\fp^{-m})}W_m(t(a,a^{-1})s_\alpha \bx_{\alpha}(x))(\omega_{\psi^{-1}} (t_{\SL_2}(a)w_2 \bn(x)))\phi^m(1) \tilde f_s^i(t_{\SL_2}(a)w_2 \bn(x)) |a|^{-2}dx da.
\end{align*}
By Lemma \ref{lem: first decomposition} and Lemma \ref{lem: stable partial bessel function}, for $m$ large, we have 
$$(W^f)_m(t(a,a^{-1})s_\alpha\bx_\alpha(x))=(W^{f'})_m(t(a,a^{-1})s_\alpha\bx_\alpha(x)), \forall x\in F-\fp^m.$$
Thus for $i$ large, by Lemma \ref{lem: sections} and Lemma \ref{lem: Weil representation}, we get
\begin{align*}
&\Psi((W^f)_m, \phi^m, f_s^i)-\Psi((W^{f'})_m, \phi^m, f_s^i)\\
=&\int_{F^\times \times \fp^{-m}} \CB_m(t(a,a^{-1})s_\alpha \bn(x), f-f')(\omega_{\psi^{-1}} (t_{\SL_2}(a)w_2 \bn(x)))\phi^m(1) \tilde f_s^i(t_{\SL_2}(a)w_2 \bn(x)) dx da\\
=&q^{-3i+m} \int_{F^\times}\CB_m(t(a,a^{-1})s_\alpha),f-f') \omega_{\psi^{-1}} (w_2)\phi^m (a) \chi(a)\eta_{-s-1}(a)da\\
=&q^{-3i}\gamma(\psi^{-1})\int_{F^\times} \CB_m(t(a,a^{-1})s_\alpha),f-f') \psi^{-1}(2a) \chi(a)\eta_{-s-1}(a)da.
\end{align*}
Thus \eqref{eq: right side} implies that
$$ \int_{F^\times}\CB_m(t(a,a^{-1})s_\alpha),f-f') \psi^{-1}(2a) \chi(a)\eta_{-s-1}(a)da=0.$$
Note that this equation is true for any quasi-character $\eta$ of $F^\times$ by assumption. Thus by the inverse Mellin transform, we get that
$$\CB_m(t(a,a^{-1})s_\alpha),f-f') \psi^{-1}(2a) =0, \forall a\in F^\times. $$
Since $\psi^{-1}(2a)\ne 0$, we then get 
\begin{equation}
\CB_m(t(a,a^{-1})s_\alpha),f-f')=0, \forall a\in F^\times.
\end{equation}

Note that $A_{s_\alpha}=\wpair{t(a,a^{-1}),a\in F^\times}$. Lemma \ref{CST} implies that $\CB_m(ts_\alpha,f)=\CB_m(ts_\alpha,f')$ for all $t\in T$ and for possibly larger $m$. Finally, notice that any element in $Bs_\alpha B$ can be written as the form $ts_\alpha \bx_\alpha(r)$ for some $r\in F$. This finishes the proof.
\end{proof}

\subsection{The group $\GSO_4$}\label{subsection: GSO4}

First, we introduce a closely related group, the orthogonal similitude group $\mathrm{GO}_4(F)$, which is realized as $\mathrm{GO}_4(F)=\{g\in \GL_4(F): g^t J_4 g =\lambda(g) J_4, \, \lambda(g)\in F^\times\}$.
Since $\det(g)^2=\lambda(g)^{4}$, it has two connected components depending on whether $\det(g)/\lambda(g)^2$ is $1$ or $-1$. We denote the identity component by $\GSO_4(F)$, i.e., $\GSO_4(F)=\{g\in \GL_4(F): g^t J_4 g =\lambda(g) J_4, \, \lambda(g)\in F^\times, \det(g)=\lambda(g)^2\}$.
Note that $\SO_4(F)=\{g\in \GSO_4(F): \lambda(g)=1\}$. Note that $c\in \RG\RO_4(F)-\GSO_4(F)$ and thus the conjugation by $c$ is an outer automorphism on $\GSO_4(F)$.

The two simple roots also define embeddings $\iota_{\alpha}, \iota_{\beta}:\GL_2(F)\to \GSO_4(F)$ which can be described explicitly as
\begin{equation*}
    \iota_{\alpha}\left( \begin{pmatrix}
     a & b 
     \\ c & d
    \end{pmatrix} \right)=\begin{pmatrix}
 a&b &0 & 0 \\
  c&d &0 & 0\\
   0&0 &a & -b\\
    0&0 &-c & d
\end{pmatrix}, \, \, 
\iota_{\beta}\left( \begin{pmatrix}
     a & b 
     \\ c & d
    \end{pmatrix} \right)=\begin{pmatrix}
 a&0 &b & 0 \\
  0&a &0 & -b\\
   c&0 &d & 0\\
    0&-c &0 & d
\end{pmatrix}.
\end{equation*}
Notice that $\lambda(\iota_\alpha(g))=\det(g)$ and $\lambda(\iota_\beta(g))=\det(g)$ for $g\in \GL_2(F)$. The embeddings $\iota_\alpha$ and $\iota_\beta$ define a homomorphism 
$$\GL_2(F)\times \GL_2(F)\to \GSO_4(F)$$
$$(g,h)\mapsto \iota_\alpha(g)\iota_\beta(h),$$
which gives an isomorphism 
\begin{equation*}
    \GSO_4(F)\cong (\GL_2(F)\times \GL_2(F) )/ \Delta(F^\times),
\end{equation*}
where $\Delta(F^\times)=\{ (a I_2, a^{-1} I_2): a\in F^\times\}\subset \GL_2(F)\times \GL_2(F)$. Thus an irreducible representation of $\GSO_4(F)$ is of the form $\pi_1\boxtimes \pi_2$ with $\omega_{\pi_1}=\omega_{\pi_2}$, where $\pi_i$ is an irreducible representation of $\GL_2(F)$ and $\omega_{\pi_i}$ is the central character of $\pi_i$.

Note that the outer automorphism $c$ on $\GSO_4(F)$ satisfies 
$$c\iota_\alpha(g)c=\iota_\beta(g), \forall g\in \GL_2(F). $$
Thus for an irreducible representation $\Pi=\pi_1\boxtimes \pi_2$ of $\GSO_4(F)$ with irreducible representations $\pi_1,\pi_2$ of $\GL_2(F)$ with $\omega_{\pi_1}=\omega_{\pi_2}$, we have
$$c\cdot \Pi=\pi_2\boxtimes \pi_1.$$

\subsection{Proof of Theorem \ref{converse theorem}}
\begin{proof}[Proof of Theorem \ref{converse theorem}]
By assumption and Theorem \ref{HL theorem}, either $\pi\cong \pi'$ or $\pi\cong c\cdot \pi'$. If $\pi\cong \pi'$, we are done. From now on, we assume that $\pi\cong c\cdot \pi'$. From this condition, we get that $c\cdot \pi\cong \pi'$. We will show that $\pi \cong c\cdot \pi \cong \pi'$ under the assumption of Theorem \ref{converse theorem}.

We fix the notations as in $\S$\ref{subsection: key}. By  the assumption $\gamma(s,\pi,\wedge^2_+\times \eta)=\gamma(s,\pi',\wedge^2_+\times \eta)$ for all quasi-character $\eta$ of $F^\times$ and Proposition \ref{prop: main}, we have 
\begin{equation}\label{eq: key}(W^f)_m(ts_\alpha)=(W^{f'})_m(ts_\alpha), \forall t\in T.\end{equation}

By \cite{GK}*{\S 2}, there exists an irreducible representation $\Pi$ of $\GSO_4(F)$ such that $\Pi|_{\SO_4(F)}$ contains $\pi$ as a direct summand. By the uniqueness of Whittaker functional of $\Pi$, we know that $\pi$ is the unique $\psi_U$-generic irreducible direct summand of $\Pi|_{\SO_4(F)}$.

We can write $\Pi=\pi_1\boxtimes \pi_2$ for irreducible representations $\pi_1,\pi_2$ of $\GL_2(F)$ with $\omega_{\pi_1}=\omega_{\pi_2}$. Note that $\GSO_4(F)$ and $\SO_4(F)$ share the same maximal unipotent subgroup. Assume that $\Lambda\in \Hom_U(\pi,\psi_U)$ is the unique $\psi_U$ Whittaker functional (up to scaler) of $\pi$. Then the map $\Pi\to \pi\to \psi_U$ gives a nonzero Whittaker functional of $\Pi$. By abuse of notations, we also denote this Whittaker functional by $\Lambda$. For $v\in \pi\subset \Pi$, we can consider $\wt W_v(g)=\Lambda(\Pi(g)v)$. Note that $\wt W_v|_{\pi}$ gives a Whittaker function of $\pi$ and any Whittaker function of $\pi$ is of this form. We write $\wt W^f$ the corresponding Whittaker function on $\GSO_4(F)$ such that $\wt W^f|_{\SO_4(F)}=W^f$. For a positive integer $m$, if we define 
\begin{equation}\label{eq: def of Howe}\wt W^f_m(g)=\frac{1}{\vol(U_m)}\int_{U_m}\wt W^f(gu)\psi_U^{-1}(u)du, g\in \GSO_4(F),\end{equation}
then $\wt W^f_m|_{\SO_4(F)}=W^f_m.$

On the other hand, by the decomposition $\Pi=\pi_1\boxtimes \pi_2$ and uniqueness of Whittaker functionals, the Whittaker functional $\Lambda$ on $\Pi$ has the form $\Lambda=\lambda_1\boxtimes \lambda_2$ for $\lambda_i\in \Hom_{U_i}(\pi_i, \psi_U|_{U_i})$, where $U_1=\wpair{\bx_\alpha(r): r\in F}$ and $U_2=\wpair{\bx_\beta(r):r\in F}$ are the corresponding maximal unipotent subgroups in $\iota_\alpha(\GL_2(F))$ and $\iota_\beta(\GL_2(F))$. Then $\CW(\Pi,\psi_U)$ is spanned by pure tensors of the form $W^1\boxtimes W^2$ with $W^i\in \CW(\pi_i,\psi_U|_{U_i}),$ where $(W^1\boxtimes W^2)(\iota_\alpha(g)\iota_\beta(h))=W^1(g)W^2(h)$. 

 We now assume that $f\in \CM(\pi)$ is chosen to be a pure tensor $f=f_1\boxtimes f_2$ for some $f_i\in \CM(\pi_i)$, namely, $$f(\iota_\alpha(g)\iota_\beta(h))=f_1(g)f_2(h).$$  Of course, we also assume that $W^f(1)=1$. Then we have  $\wt W^f =W^1\boxtimes W^2$ for $W^i(g)=\int_{U_i}f_i(ug)\psi_U^{-1}(u)du$, which is a Whittaker function of $\pi_i$. Using \eqref{eq: def of Howe}, we see that $\wt W^f_m=W^1_m\boxtimes W^2_m$, where $W^i_m(g):=\frac{1}{\vol(U_i\cap H_m)}\int_{U_i\cap H_m}W^i_m(gu)\psi_U^{-1}(u)du$, is a Howe vector for $\pi_i$.


By assumption, we have $\pi'\cong c\cdot \pi$ and thus $\Pi':=c\cdot \Pi=\pi_2\boxtimes \pi_1$ is an irreducible representation of $\GSO_4(F)$ such that $\Pi'|_{\SO_4(F)}$ contains $ \pi'$ as the unique $\psi_U$-generic direct summand.  Note that $f':=f_2\boxtimes f_1\in \CM(\Pi')$ satisfies $ W^{f'}(1)=1$. Moreover, by \eqref{eq: defn of Howe} and \eqref{eq: def of Howe}, we have $\wt W^{f'}_m|_{\SO_4(F)}=(W^{f'})_m$ and $\wt W^{f'}_m=W^2_m\boxtimes W^1_m$.

By \eqref{eq: key}, we have $\wt W^f_m(g)=\wt W^{f'}_m(g)$, for $g\in Bs_\alpha B$ and $m$ large enough. Notice that $Bs_\alpha B\subset \iota_\alpha (\GL_2)$. The above discussion shows that 
\begin{equation}\label{eq: GL2 equation}W_m^1(g)=W_m^2(g), \forall g\in B_{\GL_2}s_\alpha B_{\GL_2},\end{equation}
where $B_{\GL_2}$ is the upper triangular subgroup of $\GL_2(F)$. Since $\omega_{\pi_1}=\omega_{\pi_2}$, we have $W_{m}^1(z)=W_m^2(z)$ for all $z\in Z_{\GL_2}$. A simple application of an analogue of Lemma \ref{CST} in the $\GL_2$-case (see \cite{CST}*{Lemma 5.13}) shows that $W_m^1(g)=W_m^2(g)$ for all $g\in B_{\GL_2}$. By Bruhat decomposition and \eqref{eq: GL2 equation}, we get 
$$W_m^1(g)=W_m^2(g), \forall g\in \GL_2(F), $$
when $m$ is large enough. By uniqueness of Whittaker functional, we get that $\pi_1\cong \pi_2$. This implies that $\Pi'\cong c\cdot \Pi\cong \Pi$. Since $\pi$ (resp. $\pi'$) is the unique $\psi_U$-generic direct summand of $\Pi$ (resp. $\Pi'$), we get that $\pi\cong \pi'$. This concludes the proof.
\end{proof}

\begin{bibdiv}
\begin{biblist}

\bib{Arthur}{book}{ AUTHOR = {Arthur, James},
     TITLE = {The endoscopic classification of representations},
    SERIES = {American Mathematical Society Colloquium Publications},
    VOLUME = {61},
      NOTE = {Orthogonal and symplectic groups},
 PUBLISHER = {American Mathematical Society, Providence, RI},
      YEAR = {2013},
     PAGES = {xviii+590},
      ISBN = {978-0-8218-4990-3},
   MRCLASS = {22E55 (11F66 11F70 11F72 11R37 20G25 22E50)},
  MRNUMBER = {3135650},}
  
  \bib{Atobe}{article}{
  Author={Atobe, Hiraku}
  Title={On the Uniqueness of Generic Representations in an $L$-packet},
  Journal={International Mathematics Research Notices,},
  Volume={Vol. 2017}
  Number={23}
  Year={2017}
  Pages={7051-7068}
}

\bib{BAZ}{incollection}{
    AUTHOR = {Ben-Artzi, Asher}
    AUTHOR= {Soudry, David},
     TITLE = {{$L$}-functions for {${\rm U}_m\times R_{E/F}{\rm GL}_n\
              (n\leq[{m\over2}])$}},
 BOOKTITLE = {Automorphic forms and {$L$}-functions {I}. {G}lobal aspects},
    SERIES = {Contemp. Math.},
    VOLUME = {488},
     PAGES = {13--59},
 PUBLISHER = {Amer. Math. Soc., Providence, RI},
      YEAR = {2009},
      ISBN = {978-0-8218-4706-0},
   MRCLASS = {11F66 (11F70)},
  MRNUMBER = {2522026},
MRREVIEWER = {Anton\ Deitmar},
       DOI = {10.1090/conm/488/09563},
       URL = {https://doi.org/10.1090/conm/488/09563},
}

\bib{Ba95}{article}{
AUTHOR = {Baruch, Ehud Moshe},
     TITLE = {Local factors attached to representations of p-adic groups and
              strong multiplicity one},
      NOTE = {Thesis (Ph.D.)--Yale University},
 PUBLISHER = {ProQuest LLC, Ann Arbor, MI},
      YEAR = {1995},
     PAGES = {83},
   MRCLASS = {Thesis},
}

\bib{BZ}{article}{
AUTHOR = {Bern\v{s}te\u{\i}n, I. N.}
Author={Zelevinski\u{\i}, A. V.},
     TITLE = {Representations of the group {$GL(n,F),$} where {$F$} is a
              local non-{A}rchimedean field},
   JOURNAL = {Uspehi Mat. Nauk},
  FJOURNAL = {Akademiya Nauk SSSR i Moskovskoe Matematicheskoe Obshchestvo.
              Uspekhi Matematicheskikh Nauk},
    VOLUME = {31},
      YEAR = {1976},
    NUMBER = {3(189)},
     PAGES = {5--70},
      ISSN = {0042-1316},
   MRCLASS = {22E50},
  MRNUMBER = {0425030},}

\bib{CS}{article}{
AUTHOR = {Casselman, W.}
Author={ Shalika, J.},
     TITLE = {The unramified principal series of {$p$}-adic groups. {II}.
              {T}he {W}hittaker function},
   JOURNAL = {Compositio Math.},
  FJOURNAL = {Compositio Mathematica},
    VOLUME = {41},
      YEAR = {1980},
    NUMBER = {2},
     PAGES = {207--231},
      ISSN = {0010-437X},
   MRCLASS = {22E50},
  MRNUMBER = {581582},
MRREVIEWER = {J. Szmidt},}

\bib{CW}{article}{
Author={Cheng, Yao},
Author={Wang, Chian-Jen},
title={On gamma factors of generic representations of $\RU_{2n+1}\times \Res_{E/F}(\GL_r)$},
year={2023},
note={preprint, \href{https://arxiv.org/abs/2311.15323}{arXiv: 2311.15323}}
}

\bib{CST}{article}{
Author={Cogdell, J.}
Author={Shahidi, F.}
Author={Tsai, T-L.}
Title={ Local Langlands correspondence for $GL_n$ and the exterior and
symmetric square $\varepsilon$-factors}
Journal={Duke Math. J.}
Volume={166}
Year={2017}
Pages={2053-2132}
}

\bib{Chai}{article}{
AUTHOR = {Chai, Jingsong},
     TITLE = {Bessel functions and local converse conjecture of {J}acquet},
   JOURNAL = {J. Eur. Math. Soc. (JEMS)},
  FJOURNAL = {Journal of the European Mathematical Society (JEMS)},
    VOLUME = {21},
      YEAR = {2019},
    NUMBER = {6},
     PAGES = {1703--1728},
      ISSN = {1435-9855},
   MRCLASS = {11F70 (22E50)},
  MRNUMBER = {3945739},}

\bib{CZ}{article}{
    AUTHOR = {Chai, Jingsong}
    Author={Zhang, Qing},
     TITLE = {A strong multiplicity one theorem for {$\rm SL_2$}},
   JOURNAL = {Pacific J. Math.},
  FJOURNAL = {Pacific Journal of Mathematics},
    VOLUME = {285},
      YEAR = {2016},
    NUMBER = {2},
     PAGES = {345--374},
      ISSN = {0030-8730},
   MRCLASS = {11F70 (22E50 22E55)},
  MRNUMBER = {3575571},
MRREVIEWER = {Anton Deitmar},
       DOI = {10.2140/pjm.2016.285.345},
       URL = {https://doi.org/10.2140/pjm.2016.285.345},
}

\bib{FH}{book} {
    AUTHOR = {Fulton, William}
    Author={ Harris, Joe},
     TITLE = {Representation theory, a first course},
    SERIES = {Graduate Texts in Mathematics},
    VOLUME = {129},
 PUBLISHER = {Springer-Verlag, New York},
      YEAR = {1991},
     PAGES = {xvi+551},
      ISBN = {0-387-97527-6; 0-387-97495-4},
   MRCLASS = {20G05 (17B10 20G20 22E46)},
  MRNUMBER = {1153249},
MRREVIEWER = {James E. Humphreys},
       DOI = {10.1007/978-1-4612-0979-9},
       URL = {https://doi.org/10.1007/978-1-4612-0979-9},
}

\bib{GK}{article}{
    AUTHOR = {Gelbart, S. S.},
    AUTHOR = {Knapp, A. W.},    
     TITLE = {{$L$}-indistinguishability and {$R$} groups for the special
              linear group},
   JOURNAL = {Adv. in Math.},
  FJOURNAL = {Advances in Mathematics},
    VOLUME = {43},
      YEAR = {1982},
    NUMBER = {2},
     PAGES = {101--121},
      ISSN = {0001-8708},}

    \bib{GPSR}{book}{
    AUTHOR = {Gelbart, Stephen}
    Author={ Piatetski-Shapiro, Ilya}
    Author={ Rallis, Stephen},
     TITLE = {Explicit constructions of automorphic {$L$}-functions},
    SERIES = {Lecture Notes in Mathematics},
    VOLUME = {1254},
 PUBLISHER = {Springer-Verlag, Berlin},
      YEAR = {1987},
     PAGES = {vi+152},
      ISBN = {3-540-17848-1},
   MRCLASS = {11F70 (11R39 11S37 22E55)},
  MRNUMBER = {892097},
MRREVIEWER = {Joe Repka},
       DOI = {10.1007/BFb0078125},
       URL = {https://doi.org/10.1007/BFb0078125},
}

\bib{GPS80}{article}{
AUTHOR = {Gelbart, Stephen}
Author={Piatetski-Shapiro, I. I.}
     TITLE = {Distinguished representations and modular forms of
              half-integral weight},
   JOURNAL = {Invent. Math.},
  FJOURNAL = {Inventiones Mathematicae},
    VOLUME = {59},
      YEAR = {1980},
    NUMBER = {2},
     PAGES = {145--188},
      ISSN = {0020-9910},
   MRCLASS = {10D40 (22E55)},
  MRNUMBER = {577359},
MRREVIEWER = {Martin L. Karel},
       DOI = {10.1007/BF01390042},
       URL = {https://doi.org/10.1007/BF01390042},}

\bib{HKK}{article}
{author={Haan, Jaeho}
author={Kim, Yeansu}
author={Kwon, Sanghoon}
title={A local converse theorem for quasi-split $O_{2n}$ and $SO_{2n}$: the generic case}
year={2023}
journal={arXiv:2301.12693}
}

\bib{HL: finite}{article}
{author={Hazeltine, Alex}
author={Liu, Baiying}
title={A converse theorem for split $\SO_{2l}$ over finite fields},
year={2023}
journal={Acta Mathematica Sinica, English Series}
pages={to appear}
}

\bib{HL: p-adic}{article}
{author={Hazeltine, Alex}
author={Liu, Baiying}
title={On the local converse theorem for split $\SO_{2n}$},
year={2023}
note={preprint}
journal={arXiv:2301.13847}
}

\bib{JL}{article}{
 AUTHOR = {Jacquet, Herv\'{e}}
 author={ Liu, Baiying},
     TITLE = {On the local converse theorem for {$p$}-adic {${\rm GL}_n$}},
   JOURNAL = {Amer. J. Math.},
  FJOURNAL = {American Journal of Mathematics},
    VOLUME = {140},
      YEAR = {2018},
    NUMBER = {5},
     PAGES = {1399--1422},
      ISSN = {0002-9327},
   MRCLASS = {11S70 (11F85 22E57)},}
   
   \bib{JS}{article}{
   AUTHOR = {Jiang, Dihua}
   author={ Soudry, David},
     TITLE = {The local converse theorem for {${\rm SO}(2n+1)$} and
              applications},
   JOURNAL = {Ann. of Math. (2)},
  FJOURNAL = {Annals of Mathematics. Second Series},
    VOLUME = {157},
      YEAR = {2003},
    NUMBER = {3},
     PAGES = {743--806},}

\bib{Jo}{article}{
author={Jo, Yeongseong}
title={The local converse theorem for odd special orthogonal and symplectic groups in positive characteristic}
journal={arXiv:2205.09004}
year={2022}}

\bib{Kaplan}{article}{AUTHOR = {Kaplan, Eyal},
     TITLE = {Complementary results on the {R}ankin-{S}elberg gamma factors
              of classical groups},
   JOURNAL = {J. Number Theory},
  FJOURNAL = {Journal of Number Theory},
    VOLUME = {146},
      YEAR = {2015},
     PAGES = {390--447},}

      \bib{LZ}{article}
      {Author={Liu, Baiying}
      Author={Zhang, Qing}
      Title={On a converse theorem for $G_2$ over finite fields},
      Journal={Math. Ann.}
      Volume={383},
      Year={2022},
      Pages={1217-1283}
      }

\bib{Morimoto}{article}{
AUTHOR = {Morimoto, Kazuki},
     TITLE = {On the irreducibility of global descents for even unitary
              groups and its applications},
   JOURNAL = {Trans. Amer. Math. Soc.},
  FJOURNAL = {Transactions of the American Mathematical Society},
    VOLUME = {370},
      YEAR = {2018},
    NUMBER = {9},
     PAGES = {6245--6295},
      ISSN = {0002-9947},
   MRCLASS = {11F70 (11F30)},}
   
   \bib{Morimoto-gamma}{article}{
   author={Morimoto, Kazuki},
   title={On gamma factors of Rankin-Selberg integrals for $\RU_{2\ell}\times \Res_{E/F}(\GL_n)$},
   year={2023},
   note={preprint, \href{https://arxiv.org/abs/2306.07026}{arXiv: 2306.07026} },
   }
   
   \bib{YZ}{article}{
   Author={Yan, Pan},
   Author={Zhang, Qing},
   Title={Product of Rankin-Selberg convolutions and a new proof of Jacquet's local converse conjecture}
   notes={preprint,}
   journal={arXiv: 2309.10445}
   year={2023}
   }
   
   \bib{U11}{article}{
    AUTHOR = {Zhang, Qing},
     TITLE = {A local converse theorem for {$\rm U(1,1)$}},
   JOURNAL = {Int. J. Number Theory},
  FJOURNAL = {International Journal of Number Theory},
    VOLUME = {13},
      YEAR = {2017},
    NUMBER = {8},
     PAGES = {1931--1981},
      ISSN = {1793-0421},
   MRCLASS = {22E50 (11F70)},
  MRNUMBER = {3681684},}

\bib{Sp(2r)}{article}
{AUTHOR = {Zhang, Qing},
     TITLE = {A local converse theorem for {${\rm Sp}_{2r}$}},
   JOURNAL = {Math. Ann.},
  FJOURNAL = {Mathematische Annalen},
    VOLUME = {372},
      YEAR = {2018},
    NUMBER = {1-2},
     PAGES = {451--488},
      ISSN = {0025-5831},}
      
 \bib{U(2r+1)}{article}{
      AUTHOR = {Zhang, Qing},
     TITLE = {A local converse theorem for {${{\RU}}_{2r+1}$}},
   JOURNAL = {Trans. Amer. Math. Soc.},
  FJOURNAL = {Transactions of the American Mathematical Society},
    VOLUME = {371},
      YEAR = {2019},
    NUMBER = {8},
     PAGES = {5631--5654},
      ISSN = {0002-9947},}

\end{biblist}
\end{bibdiv}

\end{document}